\documentclass[12pt,reqno]{amsart}
\usepackage{amsmath,amssymb,amsthm}
\usepackage{mathrsfs}
\usepackage{xcolor} 
\usepackage[utf8]{inputenc}
\usepackage[T1]{fontenc}
\usepackage{appendix}
\usepackage{enumitem}
\usepackage[protrusion=true,expansion=false]{microtype}
\usepackage{tikz-cd}
\usepackage[margin=1in]{geometry}
\usepackage{mathtools}
 \usepackage[colorlinks=true, linkcolor=blue, citecolor=blue]{hyperref}    
\usepackage{orcidlink}

\newcommand{\orcid}[1]{\,{\large\orcidlink{#1}}}

\newcommand{\AffiliationsBlock}{%
  \vspace{0.8em}
  \begingroup
  \centering
  \footnotesize\itshape
  $^{a}$ Department of Mathematics, Indian Institute of Technology Madras, \\
  Chennai-600036, Tamil Nadu, India.\par
  \endgroup
  \vspace{-1.3em}
}
\makeatletter
\patchcmd{\@maketitle}{
  \ifx\@empty\@dedicatory
  \else
}{
  \AffiliationsBlock
  \vspace{1.5em}
  \ifx\@empty\@dedicatory
  \else
}{}{\message{Patch failed!}}
\makeatother
\hypersetup{colorlinks=true,linkcolor=blue,citecolor=blue}
\newtheorem{theorem}{Theorem}[section]
\newtheorem{proposition}[theorem]{Proposition}
\newtheorem{lemma}[theorem]{Lemma}
\newtheorem{corollary}[theorem]{Corollary}
\theoremstyle{definition}

\newtheorem{remark}[theorem]{Remark}

\newcommand{\HP}{\mathbb{HP}}
\newcommand{\Z}{\mathbb{Z}}
\newcommand{\Q}{\mathbb{Q}}
\newcommand{\HH}{\mathbb{H}}
\newcommand{\C}{\mathbb{C}}
\newcommand{\R}{\mathbb{R}}
\newcommand{\KO}{\widetilde{KO}}
\newcommand{\Ku}{\widetilde{K}}
\newcommand{\ch}{\operatorname{ch}}
\newcommand{\ph}{\operatorname{ph}}
\newcommand{\rk}{\operatorname{rk}}
\newcommand{\ad}{\operatorname{ad}}
\newcommand{\im}{\operatorname{im}}
\newcommand{\A}{\mathcal{A}}

\begin{document}

\title[Stable complex structures on bundles over $M^{4n}_{\ell,m}$]
{Stable complex structures on real vector bundles over connected sums of
quaternionic projective spaces}

\author[S. Mandal]{{Souvik Mandal$^{\,a,*}$}
  \orcid{0009-0004-0429-3063}
}
\thanks{\hspace{-1.1em}*
  \makebox[8.5em][l]{\textit{E-mail addresses:}}%
  \texttt{ma22d014@smail.iitm.ac.in}, \texttt{ssouvik.xyz@gmail.com}}

\begin{abstract}
For integers $n\ge 2$ and integers $\ell,m\ge 0$, not both zero, let
$M^{4n}_{\ell,m}=\ell\,\mathbb{HP}^{n}\,\#\,m\,\overline{\mathbb{HP}^{n}}$ denote the connected
sum of $\ell$ copies of the quaternionic projective space $\mathbb{HP}^{n}$ and $m$
copies of $\mathbb{HP}^{n}$ endowed with the opposite orientation. By analysing the
image of the complexification map
$c\,\colon\,\widetilde{KO}(M^{4n}_{\ell,m})\to\widetilde{K}(M^{4n}_{\ell,m})$, we characterise, in
terms of Pontryagin classes, the real vector bundles over
$M^{4n}_{\ell,m}$ admitting a stable complex structure, and deduce that
$M^{4n}_{\ell,m}$ is stably almost complex if and only if $n=2$ and $\ell-m$ is
even. Consequently no $M^{4n}_{\ell,m}$ with $n\ge 3$ admits an almost
complex structure. Sato and Suzuki asserted the non-existence of almost
complex structures on $M^{4n}_{\ell,m}$ in 1974 for $3\le n\le 10$, except
possibly when $n=3$ and $\ell=3m+1$; we establish it for all $n\ge 3$, and
in the stronger form of stable almost complex structures.
\end{abstract}
\maketitle
\vspace{-0.5em}
\begin{center}
\begin{minipage}{0.845\textwidth}
    \footnotesize
    \begin{list}{}{
        \leftmargin=5.5em
        \labelwidth=5.5em
        \labelsep=0pt \parsep=0pt \topsep=0pt \itemsep=0pt
    }
        \item[\textit{Keywords:}\hfill] Stable complex structure, Almost
complex structure, Stable almost complex structure, Quaternionic
projective space, Connected sum, Complexification, Pontryagin
character, $K$-theory.
    \end{list}

    \vspace{4pt}

    \begin{list}{}{
        \leftmargin=18.5em
        \labelwidth=18.5em
        \labelsep=0pt \parsep=0pt \topsep=0pt \itemsep=0pt
    }
        \item[2020\hspace{1mm}\textit{Mathematics Subject Classification:}\hfill]
        {Primary 55R50, 57R22;\\ Secondary 19L64, 32Q60, 57R20.}
    \end{list}
\end{minipage}
\end{center}
\vspace{1em}
\section{Introduction}\label{sec:intro}
The existence of complex and stable complex structures on real vector bundles
has been studied extensively by obstruction-theoretic methods, with
characteristic classes and secondary cohomology operations as the principal
tools; see Massey \cite{Massey}, Thomas \cite{Thomas}, Yang
\cite{Yang2012,Yang2015} and Albanese and Milivojevi\'c
\cite{albanesemassey}.

No quaternionic projective space $\HP^{n}$ with $n\ge 1$ admits an almost
complex structure. For $\HP^{1}\cong S^{4}$ this is due to Ehresmann
\cite{Ehresmann} and Hopf \cite{Hopf}, and for $n\ge 4$ it was established
by Hirzebruch \cite{Hirzebruch1953} using characteristic classes. The cases
$n=2$ and $n=3$ were settled by Milnor, whose result was announced in
Hirzebruch's address to the International Congress of Mathematicians in 1958
\cite{Hirzebruch1958}; Milnor's proof, however, has remained unpublished
(see \cite[Section~1]{GMS}). Later, Massey \cite{Massey1962} gave a uniform proof of the non-existence of almost
complex structures on $\HP^{n}$ for all $n\ge 2$,
by computing the complex $K$-theory of $\HP^{n}$ and the Chern character of
its tangent bundle. For $n=1$ the stable analogue
of this non-existence result fails, since the tangent bundle of
$\HP^{1}\cong S^{4}$ is stably trivial. For $n\ge 2$, by contrast, Gauduchon,
Moroianu and Semmelmann \cite[Theorem~1.1]{GMS} proved, by means of index
theory, that a compact quaternion-K\"ahler manifold of dimension $4n$ with
positive scalar curvature is not stably almost complex unless it is isometric
to the complex Grassmannian $\operatorname{Gr}_{2}(\C^{n+2})$; in particular,
$\HP^{n}$ is not stably almost complex. For $\ell+m\ge 2$, however, the manifold
$\ell\,\HP^{n}\,\#\,m\,\overline{\HP^{n}}$ admits no such metric, and hence
lies outside the scope of \cite[Theorem~1.1]{GMS}. Indeed, it is
$3$-connected and its fourth Betti number equals $\ell+m\ge 2$, whereas that
of $\HP^{n}$ equals $1$, so that
$\ell\,\HP^{n}\,\#\,m\,\overline{\HP^{n}}$ is not diffeomorphic to
$\HP^{n}$; on the other hand, a compact quaternion-K\"ahler manifold with
positive scalar curvature and vanishing second homotopy group is
diffeomorphic to $\HP^{n}$ \cite[Theorem~0.2]{LeBrunSalamon}. 

Almost complex structures on connected sums have attracted considerable
attention. A connected sum of closed almost complex manifolds is not
necessarily almost complex, and Albanese and Milivojevi\'c
\cite{albanese2019connected} determined precisely the dimensions in which
every such connected sum is almost complex. In contrast, a connected sum of
stably almost complex manifolds is again stably almost complex, since its
tangent bundle is stably isomorphic to the Whitney sum of the pull-backs of
the tangent bundles of the summands along the collapse maps \cite{GK}.
Almost complex structures on connected sums of almost complex manifolds with
complex projective spaces were studied by Yang \cite{yang2019almost}.

We now turn our attention to connected sums of quaternionic projective
spaces. In 1974, Sato and Suzuki \cite[Theorem~B]{SatoSuzuki} asserted that
$\ell\,\HP^{n}\,\#\,m\,\overline{\HP^{n}}$ is not almost complex for every
$n\le 10$, except possibly when $n=3$ and $\ell=3m+1$. Since
then, the existence problem for almost complex structures on these manifolds
has been regarded as settled. The assertion of Sato and Suzuki is,
however, false for $n=2$. Indeed, by \cite[Cor.~1.4]{Mandal}, the manifold
$\ell\,\HP^{2}\,\#\,m\,\overline{\HP^{2}}$ carries an almost complex structure
compatible with its orientation if and only if $\ell=2m+1$ and $m$ is odd.
This criterion can also be deduced from \cite[Theorems~1 and~2]{Yang2012}
and from \cite[Theorem~1]{Heaps}. The article~\cite{Mandal} also identifies
the gap in the proof of \cite[Theorem~B]{SatoSuzuki}. For $n\ge 3$, Corollary~\ref{cor:tangent}
below shows that none of these connected sums is even stably almost complex.
This confirms the assertion of Sato and Suzuki for $3\le n\le 10$ in a
stronger form and extends it to all $n\ge 3$; in particular, the possible
exception $n=3$, $\ell=3m+1$ does not occur.

An oriented real vector bundle of even rank is said to admit a \emph{complex
structure} if it is isomorphic, as an oriented bundle, to the underlying real
bundle of a complex vector bundle. Similarly, a real vector bundle of
arbitrary rank is said to admit a \emph{stable complex structure} if it is
stably isomorphic to the underlying real bundle of a complex vector bundle.
An oriented smooth manifold of even dimension is said to be \emph{almost
complex} if its tangent bundle admits a complex structure, and a smooth
manifold of arbitrary dimension is said to be \emph{stably almost complex} if
its tangent bundle admits a stable complex structure. For a finite connected
$CW$-complex $X$, we write $\Ku(X)$ and $\KO(X)$ for its reduced complex and
real $K$-theory groups, and $r\colon\Ku(X)\to\KO(X)$ and
$c\colon\KO(X)\to\Ku(X)$ for the realification and complexification
homomorphisms, induced by regarding a complex vector bundle as a real one and
by tensoring a real vector bundle with $\C$, respectively. For such $X$, a
real vector bundle $\xi$ over $X$ admits a stable complex structure if and
only if its stable class $\tilde\xi=\xi-\rk\xi\in\KO(X)$ lies in the image of
$r$; we shall not distinguish notationally between $\xi$ and $\tilde\xi$.

The purpose of this article is to determine which real vector bundles over
the closed oriented smooth manifolds
\begin{equation}\label{eq:M}
M^{4n}_{\ell,m}\;=\;\ell\,\HP^{n}\;\#\;m\,\overline{\HP^{n}},
\qquad n\ge 2,\quad \ell,m\ge 0, \quad \ell+m\ge 1,
\end{equation}
admit a stable complex structure, and to deduce which of these manifolds are
stably almost complex; here $\overline{\HP^{n}}$ denotes $\HP^{n}$ with the
opposite orientation. Theorem~\ref{thm:main} answers the first question in
terms of the Pontryagin classes of the bundle, and
Corollary~\ref{cor:tangent} answers the second.

The case $n=1$ requires no discussion. The sphere $\mathbb{S}^{4}=\HP^{1}$
admits an orientation-reversing diffeomorphism, and it is an identity element
for the connected sum operation on closed oriented smooth $4$-manifolds.
Consequently $\ell\,\HP^{1}\,\#\,m\,\overline{\HP^{1}}$ is diffeomorphic to
$\mathbb{S}^{4}$ for all $\ell$ and $m$, and every real vector bundle over $\mathbb{S}^{4}$ admits a stable complex
structure. In other words, the realification homomorphism
$r\colon\Ku(\mathbb{S}^{4})\to\KO(\mathbb{S}^{4})$ is surjective; see
\cite[Chapter~4, Lemma~5.11]{liegrp}. For $n\ge 2$, by contrast, $\HP^{n}$ admits no orientation-reversing
diffeomorphism. Indeed, by \eqref{eq:pHP} its Pontryagin number
$\mathfrak{p}_{1}^{\,n}(\HP^{n}):=\big\langle p_{1}(\HP^{n})^{n},
[\HP^{n}]\big\rangle=(2n-2)^{n}$ is nonzero, while all Pontryagin numbers of
a manifold admitting such a diffeomorphism vanish. Thus $\HP^{n}$ and $\overline{\HP^{n}}$ are not orientation-preservingly
diffeomorphic, and $M^{4n}_{\ell,m}$ is not determined by $k=\ell+m$ alone.

We now fix the notation in which the results are stated. The fundamental
class of a closed oriented manifold $N$ is denoted by $[N]$, and we put
$k:=\ell+m$. By \cite[Corollary~2.3]{Szczarba}, 
there is a generator $u\in H^{4}(\HP^{n};\Z)$ such
that $H^{*}(\HP^{n};\Z)=\Z[u]/(u^{n+1})$ and the total Pontryagin class of
$\HP^{n}$ is
\begin{equation}\label{eq:pHP}
p(\HP^{n})=(1+u)^{2n+2}(1+4u)^{-1}.
\end{equation}
Since $p_{1}(\HP^{n})=(2n-2)u$ is nonzero for $n\ge 2$, the generator $u$ is
uniquely determined by \eqref{eq:pHP}. We orient $\HP^{n}$ so that
$\langle u^{n},[\HP^{n}]\rangle=1$. Put $k:=\ell+m$, let
$\omega_{4n,\ell,m}\in H^{4n}(M^{4n}_{\ell,m};\Z)\cong\Z$ be the generator
with $\langle\omega_{4n,\ell,m},[M^{4n}_{\ell,m}]\rangle=1$, and set
\begin{equation}\label{eq:eps}
\varepsilon_{r}=
\begin{cases} +1, & 1\le r\le \ell,\\ -1, & \ell<r\le k.\end{cases}
\end{equation}
For $1\le r\le k$, let $q_{r}\colon M^{4n}_{\ell,m}\to\HP^{n}$ denote the
standard collapse map onto the $r$-th summand, obtained by collapsing the
complement of that summand to a point; it has degree $\varepsilon_{r}$. Put
$x_{r}:=q_{r}^{*}(u)\in H^{4}(M^{4n}_{\ell,m};\Z)$.

Write $\psi\colon\mathbb{S}^{4n-1}\to\HP^{n-1}$ for the attaching map of the
top cell of $\HP^{n}$, and set
$\psi_{r}=\varepsilon_{r}\psi\in\pi_{4n-1}(\HP^{n-1})$ for $1\le r\le k$. Then up
to homotopy equivalence,
\[
M^{4n}_{\ell,m}=\Big(\bigvee_{k}\HP^{n-1}\Big)\cup_{\varphi}\mathbb{D}^{4n},
\]
where $\varphi\colon\mathbb{S}^{4n-1}\to\bigvee\limits_{k}\HP^{n-1}$ is the composite
of the pinch map $\mathbb{S}^{4n-1}\to\bigvee\limits_{k}\mathbb{S}^{4n-1}$ with
$\psi_{1}\vee\dots\vee\psi_{k}$. Thus $M^{4n}_{\ell,m}$ fits into the cofibre
sequence
\begin{equation}\label{eq:cofM}
\mathbb{S}^{4n-1}\xrightarrow{\ \varphi\ }\bigvee_{k}\HP^{n-1}
\xrightarrow{\ \iota\ }M^{4n}_{\ell,m}\xrightarrow{\ \pi\ }\mathbb{S}^{4n},
\end{equation}
where $\pi$ denotes the degree one collapse map, and $q_{r}\circ\iota$
carries the $r$-th wedge summand by the inclusion $\HP^{n-1}\subset\HP^{n}$
and collapses the remaining ones to a point.

The cohomology ring of $M^{4n}_{\ell,m}$ is given by
\begin{equation}\label{eq:ringM}
H^{*}(M^{4n}_{\ell,m};\Z)\cong\Z[x_{1},\dots ,x_{k}]\Big/
\Big(\,x_{r}x_{s},\ \varepsilon_{r}x_{r}^{\,n}-\varepsilon_{s}x_{s}^{\,n},\
x_{t}^{\,n+1}\ :\ 1\le r\neq s\le k,\ 1\le t\le k\,\Big)
\end{equation}
In particular, the Cohomologies of $M^{4n}_{\ell,m}$ are free abelian and
concentrated in the degrees $4j$ with $0\le j\le n$; for $1\le j\le n-1$ one
has $H^{4j}(M^{4n}_{\ell,m};\Z)\cong\Z^{k}$ with basis
$\{x_{1}^{j},\dots ,x_{k}^{j}\}$, while $H^{0}$ and $H^{4n}$ are infinite
cyclic and $x_{r}^{\,n}=\varepsilon_{r}\,\omega_{4n,\ell,m}$ in
$H^{4n}(M^{4n}_{\ell,m};\Z)$ for $1\le r\le k$. For $n=2$ this
description is proved in \cite[Lemma~2.3]{Mandal}, and the same argument
applies for $n\ge 3$.

For a real vector bundle $\xi$ over $M^{4n}_{\ell,m}$ we call the integers
$P_{r,j}$ $(1\le r\le k,\ 1\le j\le n-1)$ and $Q$ determined by
\begin{equation}\label{eq:coords}
p_{j}(\xi)=\sum_{r=1}^{k}P_{r,j}\,x_{r}^{\,j}\quad (1\le j\le n-1),
\qquad p_{n}(\xi)=Q\,\omega_{4n,\ell,m},
\end{equation}
the \emph{Pontryagin coordinates} of $\xi$. 

Our main result (Theorem~\ref{thm:main}) is expressed in terms of certain polynomials, which convert the Pontryagin coordinates of a bundle into the
coordinates of its complexification (Theorem~\ref{lem:translate}). Let $t_{1},t_{2},\dots$ be
indeterminates, and let $s_{i}\in\Z[t_{1},\dots ,t_{i}]$ be the \emph{Newton
polynomials}, given recursively by
\begin{equation}\label{eq:newton}
s_{1}:=t_{1},\qquad
s_{i}:=\sum_{q=1}^{i-1}(-1)^{q-1}t_{q}\,s_{i-q}+(-1)^{i-1}\,i\,t_{i}
\quad (i\ge 2).
\end{equation}
Let $\kappa_{i,j}\in\Q$ $(i,j\ge 1)$ be given by
\begin{equation}\label{eq:gamma}
\kappa_{1,j}:=\frac{2}{(2j)!}\ \ (j\ge 1),\qquad
\kappa_{i,j}:=0\ \ (j<i),\qquad
\kappa_{i,j}:=\sum_{q=1}^{\,j-i+1}\frac{2}{(2q)!}\,\kappa_{i-1,\,j-q}
\ \ (j\ge i\ge 2),
\end{equation}
and let $\A_{j}\in\Q[t_{1},\dots ,t_{j}]$ be the polynomials given by the recursion
\begin{equation}\label{eq:Arec}
\A_{1}:=t_{1},\qquad
\A_{j}:=\frac{2}{(2j)!}\,s_{j}(t_{1},\dots ,t_{j})
-\sum_{i=1}^{j-1}\kappa_{i,j}\,\A_{i}\qquad (j\ge 2).
\end{equation}

For a real vector bundle $\xi$ over $M^{4n}_{\ell,m}$ with Pontryagin
coordinates $P_{r,j}$ and $Q$ as in \eqref{eq:coords}, we define
\begin{equation}\label{eq:defB}
\mathcal{B}(\xi):=\sum_{r=1}^{k}\varepsilon_{r}\,
\A_{n}\big(P_{r,1},\dots ,P_{r,n-1},0\big)
+(-1)^{\,n-1}\,\frac{1}{(2n-1)!}\;Q .
\end{equation}
By Theorem~\ref{lem:translate}, the numbers
$\A_{j}(P_{r,1},\dots ,P_{r,j})$ $(1\le r\le k,\ 1\le j\le n-1)$ and
$\mathcal{B}(\xi)$ are integers for every real vector bundle $\xi$ over
$M^{4n}_{\ell,m}$, and by Proposition~\ref{prop:imcM} the former are even for
odd $j$, as is the latter for odd $n$.

Our main result is the following criterion for the existence of a stable
complex structure on a real vector bundle over $M^{4n}_{\ell,m}$. Its proof,
given in Section~\ref{sec:proof}, combines the $K$-theoretic characterisation
of stable complex structures in Corollary~\ref{lem:criterion}, the
description of the image of the complexification map
$c\,\colon\,\KO(M^{4n}_{\ell,m})\to\Ku(M^{4n}_{\ell,m})$ in
Proposition~\ref{prop:imcM}, and the expression of the coordinates of
$c(\xi)$ in terms of the Pontryagin coordinates of $\xi$ in
Theorem~\ref{lem:translate}.

\begin{theorem}\label{thm:main}
For $n\ge 2$, let $\xi$ be a real vector bundle over $M^{4n}_{\ell,m}$ with
Pontryagin coordinates $P_{r,j}$ and $Q$ as in \eqref{eq:coords}. Then $\xi$
admits a stable complex structure if and only if
\begin{enumerate}
\item[\textup{(a)}] $\A_{j}\big(P_{r,1},\dots ,P_{r,j}\big)$ is an even
integer for all $1\le r\le k$ and all even $j$ with $2\le j\le n-1$, and
\item[\textup{(b)}] $\mathcal{B}(\xi)$ is an even integer when $n$ is even.
\end{enumerate}
\end{theorem}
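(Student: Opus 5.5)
The plan is to reduce the existence of a stable complex structure to a parity condition on the coordinates of $c(\xi)$ in a basis of $\Ku(M^{4n}_{\ell,m})$, and then translate that condition into Pontryagin coordinates. By Corollary~\ref{lem:criterion}, $\xi$ admits a stable complex structure exactly when $\xi\in\im r$. Since $r\circ c=2$ and $c\circ r=1+\psi^{-1}$, the criterion should read: $c(\xi)$ lies in the image under $c$ of $\im r$, i.e.\ in $c\,r(\Ku)=\{y+\bar y\}$. So the first step is to fix a $\Z$-basis of $\Ku(M^{4n}_{\ell,m})$. I would take the classes $y_r^{\,j}$, with $y_r=q_r^{*}(\text{the reduced quaternionic line bundle, viewed complexly})$ for $1\le j\le n-1$, together with a top class $z=\pi^{*}(\text{generator of }\Ku(\mathbb{S}^{4n}))$. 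This basis comes from the cofibre sequence \eqref{eq:cofM}, which splits because all cohomology is even.

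Next I would use Proposition~\ref{prop:imcM}, which describes $\im c$ in this basis, and compute $\im(c\circ r)$ there. Conjugation acts on $y_r$ trivially, because quaternionic bundles are self-conjugate. On $z$ it acts by $(-1)^{n}$, since $\psi^{-1}$ acts on $\Ku(\mathbb{S}^{4n})$ by $(-1)^{2n}$ in the appropriate sense. I should double-check this sign via Bott periodicity, as it is what distinguishes even $n$ from odd $n$.

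The expected outcome is as follows. The coordinate of $c(\xi)$ on $y_r^{\,j}$ must be even for the relevant $j$. For odd $j$ this evenness is automatic, by Proposition~\ref{prop:imcM}. For even $j$ it is a genuine condition, because real bundles can realise odd coefficients there while $c\,r$ cannot. Likewise the top coordinate must be even when $n$ is even, and is automatically even when $n$ is odd. In effect, $\im c$ and $\im(cr)$ differ by an index-$2$ quotient exactly in these coordinates. Since $c$ is injective on $\KO$ modulo torsion, and $\KO(M)$ is torsion-free here (cells only in degrees $\equiv 0 \bmod 4$), $\xi\in\im r$ if and only if $c(\xi)\in c(\im r)$. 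So the criterion becomes: these coordinates are even.

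Finally, Theorem~\ref{lem:translate} identifies the coordinate of $c(\xi)$ on $y_r^{\,j}$ with $\A_j(P_{r,1},\dots,P_{r,j})$ and the top coordinate with $\mathcal{B}(\xi)$. Substituting gives conditions (a) and (b).

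The main obstacle is the middle step: showing precisely that $\im(c\circ r)$ inside $\im c$ is cut out by these parity conditions and no others. In particular, one must handle the interaction between the $y_r^{\,j}$ coordinates and the top cell. Products such as $y_r^{\,n}$ equal $\varepsilon_r z$ plus corrections, so one needs a basis in which both $c$ and conjugation are diagonal, or at least triangular with the right parities. I would first try to verify this one summand at a time using the known result for $\HP^{n}$, namely that $c(\KO(\HP^n))$ is spanned by $y^{j}$ with $2y^{j}$ for odd $j$. I would then glue the summands along the cofibre sequence.
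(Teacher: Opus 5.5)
Your skeleton matches the paper's: reduce to parities of the coordinates of $c(\xi)$ in the basis $\{\sigma_r^{\,j}\}\cup\{\tau\}$, use Proposition~\ref{prop:imcM} for the parities that hold automatically, and translate with Theorem~\ref{lem:translate}. However, the step you flag as the main obstacle rests on a false claim, so there is a genuine gap.

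You assert that conjugation acts on the top class $z=\pi^{*}b_{4n}$ by $(-1)^{n}$, and that this separates even from odd $n$. It does not. Conjugation multiplies the degree-$2i$ part of the Chern character by $(-1)^{i}$, and $\Ku(\mathbb{S}^{4n})$ sits in degree $4n$, so the sign is $(-1)^{2n}=+1$ (your own parenthetical). Your two claims are also mutually incompatible. Conjugation is a ring automorphism fixing $y_r$, so it fixes $y_r^{\,n}=\varepsilon_r z$.

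In fact, since $H^{*}(M^{4n}_{\ell,m};\Q)$ lies in degrees divisible by $4$ and $\ch$ is injective on the torsion-free group $\Ku(M^{4n}_{\ell,m})$, conjugation is the identity on all of $\Ku(M^{4n}_{\ell,m})$ (Lemma~\ref{lem:conj}). Hence $c(r(y))=2y$ and $c(\im r)=2\Ku(M^{4n}_{\ell,m})$. This is exactly Corollary~\ref{lem:criterion}, which you cite but paraphrase only as ``$\xi\in\im r$'', discarding its content.

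Had your sign been right, the top coordinate of $y+\overline{y}$ would vanish for odd $n$. Your criterion would then force $\mathcal{B}(\xi)=0$ rather than impose no condition, contrary to the outcome you say you expect. The even/odd dichotomy actually comes from $\im c$: the map $c\colon KO^{-4n}(\mathrm{pt})\to K^{-4n}(\mathrm{pt})$ is multiplication by $a_n\in\{1,2\}$. This is already built into Proposition~\ref{prop:imcM}, not into conjugation.

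Once you know $c(\im r)=2\Ku(M^{4n}_{\ell,m})$, the obstacle disappears. You need no diagonalising or triangular basis and no summand-by-summand gluing. The criterion is simply that every coordinate of $c(\xi)$ is even. Proposition~\ref{prop:imcM} makes the odd-$j$ coordinates, and the $\tau$-coordinate for odd $n$, automatically even. Theorem~\ref{lem:translate} turns the remaining coordinates into conditions (a) and (b).

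Also note that $\sigma_r^{\,n}=\varepsilon_r\tau$ holds exactly, with no correction terms, by Proposition~\ref{prop:KM}, since $g(x_r)^{n}=x_r^{\,n}$.
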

Applying Theorem~\ref{thm:main} to the tangent bundle of $M^{4n}_{\ell,m}$
yields the following corollary, proved in Section~\ref{sec:proof}.

\begin{corollary}\label{cor:tangent}
For $n\ge 2$, the manifold $M^{4n}_{\ell,m}$ is stably almost complex if and
only if $n=2$ and $\ell\equiv m\pmod 2$. In particular, $M^{4n}_{\ell,m}$ is
not almost complex for $n\ge 3$.
\end{corollary}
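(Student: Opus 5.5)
We apply Theorem~\ref{thm:main} to the tangent bundle $\tau$ of $M^{4n}_{\ell,m}$. The first step is to compute its Pontryagin coordinates. As recalled in the introduction, $\tau$ is stably isomorphic to $\bigoplus_{r}q_{r}^{*}\tau(\HP^{n})$ modulo trivial summands. Hence
\[
p(M^{4n}_{\ell,m})=\prod_{r=1}^{k}q_{r}^{*}p(\HP^{n}).
\]
Write $(1+u)^{2n+2}(1+4u)^{-1}=\sum_{j}c_{j}u^{j}$, using \eqref{eq:pHP}. Since $x_{r}x_{s}=0$ for $r\ne s$ by \eqref{eq:ringM}, all cross terms vanish, and
\[
p_{j}(\tau)=\sum_{r}c_{j}x_{r}^{j}\quad (1\le j\le n-1),\qquad p_{n}(\tau)=\sum_{r}c_{n}x_{r}^{n}=(\ell-m)c_{n}\,\omega_{4n,\ell,m}.
\]
Thus $P_{r,j}=c_{j}$ for every $r$, and $Q=(\ell-m)c_{n}$.

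Next I simplify $\mathcal{B}(\tau)$. In \eqref{eq:Arec}, $t_{n}$ occurs only through $s_{n}$, whose $t_{n}$-term is $(-1)^{n-1}n\,t_{n}$. So $\A_{n}$ is affine in $t_{n}$ with coefficient $\frac{2n(-1)^{n-1}}{(2n)!}=\frac{(-1)^{n-1}}{(2n-1)!}$. Substituting into \eqref{eq:defB} gives
\[
\mathcal{B}(\tau)=(\ell-m)\,\A_{n}(c_{1},\dots,c_{n}).
\]
Everything is therefore reduced to the numbers $\A_{j}(c_{1},\dots,c_{j})$, which depend only on $\HP^{n}$.

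To evaluate these numbers I would use Theorem~\ref{lem:translate} for the case $k=1$, $M=\HP^{n}$; by naturality under $q_{r}$, the same numbers appear for $M^{4n}_{\ell,m}$. This identifies $\A_{j}(c_{1},\dots,c_{j})$ with the coordinates of $c(\tau(\HP^{n}))$ in the $K$-theory basis used there. I expect this basis to be given by powers of $z=c'(H)-2$, where $H$ is the tautological quaternionic line bundle and $c'$ regards it as a complex bundle. I would then compute $c(\tau(\HP^{n}))$ directly from the standard splitting
\[
\tau(\HP^{n})\otimes\C\;\oplus\;\mathrm{Sym}^{2}H\;\cong\;(n+1)\,H\otimes_{\C}H .
\]
Using $H\otimes H=\mathrm{Sym}^{2}H+1$ and $z^{2}+4z=c'(H)^{2}-4$, this should give
\[
c(\tau)=(2n+2)z-(z^{2}+4z)=(2n-2)z-z^{2}.
\]
As a consistency check, the first coordinate $2n-2$ matches $p_{1}=(2n-2)u$. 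If this holds, then $\A_{1}(c_{1})=2n-2$, $\A_{2}(c_{1},c_{2})=-1$, and $\A_{j}(c_{1},\dots,c_{j})=0$ for $j\ge3$.

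With these values the conditions of Theorem~\ref{thm:main} are immediate.
\begin{itemize}
\item For $n\ge3$, the index $j=2$ satisfies $2\le j\le n-1$, and $\A_{2}=-1$ is odd. So condition (a) fails for every $\ell,m$, and $M^{4n}_{\ell,m}$ is not stably almost complex.
\item For $n=2$, condition (a) is vacuous, and condition (b) asks that $\mathcal{B}(\tau)=-(\ell-m)$ be even, that is, $\ell\equiv m\pmod 2$.
\end{itemize}
Since an almost complex structure yields a stable one, the final assertion of the corollary follows.

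The main obstacle is the identification step. I must match the basis and normalisation in Theorem~\ref{lem:translate} with the computation of $c(\tau(\HP^{n}))$, including signs and the choice of $z$. If that matching is awkward, the fallback is to verify $\A_{2}(c_{1},c_{2})=-1$ directly from \eqref{eq:newton}--\eqref{eq:Arec}. With $c_{1}=2n-2$ and $c_{2}=\binom{2n+2}{2}-4(2n+2)+16$, one has
\[
\A_{2}=\tfrac{1}{12}(c_{1}^{2}-2c_{2})-\tfrac{1}{12}c_{1},
\]
and this simplifies to $-1$ by a routine polynomial computation. The same direct approach works for $\A_{n}$ when $n=2$.
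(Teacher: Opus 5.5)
Your proposal is correct and takes essentially the paper's route. Both apply Theorem~\ref{thm:main} to the tangent bundle, use $c([TM^{4n}_{\ell,m}]-4n)=\sum_r\big(2(n-1)\sigma_r-\sigma_r^2\big)$ with Theorem~\ref{lem:translate} to get $\A_2=-1$ (so \textup{(a)} fails for $n\ge 3$), and obtain $\mathcal{B}=-(\ell-m)$ when $n=2$. Your reduction $\mathcal{B}(TM^{4n}_{\ell,m})=(\ell-m)\,\A_n(c_1,\dots,c_n)$ and the detour through the case $k=1$ are only repackaging, since Corollary~\ref{lem:TM} already gives $c(TM^{4n}_{\ell,m})$ directly on $M^{4n}_{\ell,m}$.

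One slip: the displayed splitting $\tau(\HP^n)\otimes\C\oplus\mathrm{Sym}^2H\cong(n+1)\,H\otimes_{\C}H$ is false, since the ranks are $4n+3$ and $4n+4$. Complexifying \eqref{eq:THP} gives instead $\tau(\HP^n)\otimes\C\oplus\mathrm{Sym}^2H\oplus\C\cong 2(n+1)H$. That is the relation your formula $c(\tau)=(2n-2)z-z^2$ actually uses, and Corollary~\ref{cor:THP} already provides it.
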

\begin{remark}
For $n=2$, Corollary~\ref{cor:tangent} recovers \cite[Corollary~1.2]{Mandal},
where $M^{8}_{\ell,m}$ was shown, by a different argument, to be stably
almost complex precisely when $\ell-m$ is even. By a criterion of Thomas
\cite[Theorem~1.7]{Thomas}, $M^{8}_{\ell,m}$ is almost complex if and only if
$\ell=2m+1$ and $m$ is odd~\cite[Cor.~1.4]{Mandal}.
\end{remark}
\subsection*{Organisation of the paper}
Section~\ref{sec:crit} establishes the $K$-theoretic characterisation of
stable complex structures. Section~\ref{sec:HP} computes $\Ku(\HP^{n})$ and
$\KO(\HP^{n})$ with explicit generators and determines the image of the
complexification map. Section~\ref{sec:M} carries out the corresponding
computation for $M^{4n}_{\ell,m}$. Section~\ref{sec:univ} expresses the
complexification of a real vector bundle over $M^{4n}_{\ell,m}$ in terms of
its Pontryagin classes, by means of the polynomials $\A_{j}$. Theorem~\ref{thm:main} and
Corollary~\ref{cor:tangent} are proved in Section~\ref{sec:proof}.
\section{A \texorpdfstring{$K$}{}-theoretic characterisation of stable complex
structures}\label{sec:crit}

In this section we characterise stable complex structures on real vector
bundles over a class of finite $CW$-complexes that contains the manifolds
$M^{4n}_{\ell,m}$, in terms of the complexification morphism
(Corollary~\ref{lem:criterion}). This characterisation is used in
Section~\ref{sec:proof} to prove Theorem~\ref{thm:main}.

Let $X$ be a finite connected $CW$-complex. For $y\in\Ku(X)$ we write
$\overline{y}$ for its complex conjugate. Since
$(\xi\otimes_{\R}\C)_{\R}\cong\xi\oplus\xi$ for every real vector bundle
$\xi$ and $E_{\R}\otimes_{\R}\C\cong E\oplus\overline{E}$ for every complex
vector bundle $E$, one has
\begin{equation}\label{eq:rc}
r\big(c(\xi)\big)=2\xi\quad(\xi\in\KO(X)),\qquad
c\big(r(y)\big)=y+\overline{y}\quad(y\in\Ku(X)).
\end{equation}
In the rest of this section we assume in addition that $\Ku(X)$ and
$\KO(X)$ are torsion free and that $H^{*}(X;\Q)$ is concentrated in degrees
divisible by $4$. We first show that complex conjugation is trivial on
$\Ku(X)$.
\begin{lemma}\label{lem:conj}
Complex conjugation acts as the identity on $\Ku(X)$.
\end{lemma}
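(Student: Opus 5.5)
The plan is to use the Chern character. Since $\Ku(X)$ is torsion free, $\ch\colon \Ku(X)\to H^{\mathrm{even}}(X;\Q)$ is injective. Indeed, $\ch\otimes\Q\colon \Ku(X)\otimes\Q\to H^{\mathrm{even}}(X;\Q)$ is an isomorphism for a finite CW-complex, so its kernel on $\Ku(X)$ is the torsion subgroup, which vanishes by hypothesis. It therefore suffices to show that $\ch(\overline{y})=\ch(y)$ for every $y\in\Ku(X)$.

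For this I will use the standard formula for the Chern character of the conjugate bundle. Since $c_i(\overline{E})=(-1)^i c_i(E)$, the splitting principle gives
\[
\ch_q(\overline{y})=(-1)^q\,\ch_q(y)\in H^{2q}(X;\Q).
\]
Concretely, the Chern roots of $\overline{E}$ are the negatives of those of $E$, and $\ch_q$ is homogeneous of degree $q$ in the Chern roots. The formula extends from bundles to differences $E-F$ by additivity.

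Now the hypothesis that $H^{*}(X;\Q)$ is concentrated in degrees divisible by $4$ means that $\ch_q(y)=0$ whenever $q$ is odd, because $H^{2q}(X;\Q)=0$ in that case. Hence $\ch_q(\overline{y})=\ch_q(y)$ for every $q$, so $\ch(\overline{y})=\ch(y)$, and injectivity gives $\overline{y}=y$.

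The only step needing care is the injectivity of $\ch$ on the integral group. I will justify it by the rational isomorphism (Atiyah–Hirzebruch, via the collapsing rational spectral sequence) together with torsion-freeness. I do not expect a real obstacle. Note that the hypothesis that $\KO(X)$ is torsion free is not needed for this lemma.
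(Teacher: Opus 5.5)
Your proposal is correct and follows the paper's proof. Both use the sign rule $\ch_q(\overline{y})=(-1)^q\ch_q(y)$ coming from $c_i(\overline{E})=(-1)^i c_i(E)$, the vanishing of $H^{2q}(X;\Q)$ for odd $q$, and the injectivity of $\ch$ on the torsion-free group $\Ku(X)$. Your justification of that injectivity via the rational isomorphism is sound, and you are right that torsion-freeness of $\KO(X)$ is not needed for this lemma.
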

\begin{proof}
For a complex vector bundle $E$ over $X$, the $k$-th Chern classes of $E$
and of its conjugate satisfy $c_{k}(\overline{E})=(-1)^{k}c_{k}(E)$
\cite[Lemma~14.9]{MS}, so the Chern roots of $\overline{E}$ are the
negatives of those of $E$. Hence the component of the Chern character
$\ch(\overline{E})$ in $H^{2i}(X;\Q)$ is $(-1)^{i}$ times that of $\ch(E)$,
and by additivity the same holds for every $y\in\Ku(X)$. Since
$H^{2i}(X;\Q)=0$ for odd $i$, it follows that $\ch(\overline{y})=\ch(y)$. As $\Ku(X)$ is torsion free, the Chern character map is injective on $\Ku(X)$
\cite[p.~19]{AH}, and hence $\overline{y}=y$.
\end{proof}
\begin{corollary}\label{lem:criterion}
A real vector bundle $\xi$ over $X$ admits a stable complex structure if and
only if $c(\xi)\in 2\Ku(X)$.
\end{corollary}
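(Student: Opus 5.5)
The plan is to prove both implications using the identities \eqref{eq:rc}, Lemma~\ref{lem:conj}, and the standing assumption that $\Ku(X)$ and $\KO(X)$ are torsion free.

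For the forward direction, suppose $\xi$ admits a stable complex structure. Then $\xi=r(y)$ for some $y\in\Ku(X)$. By \eqref{eq:rc} we get $c(\xi)=c(r(y))=y+\overline{y}$. By Lemma~\ref{lem:conj}, $\overline{y}=y$, so $c(\xi)=2y\in 2\Ku(X)$.

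For the converse, suppose $c(\xi)=2y$ for some $y\in\Ku(X)$. Applying $r$ and using \eqref{eq:rc} gives
\[
2\xi=r(c(\xi))=r(2y)=2r(y).
\]
Hence $2(\xi-r(y))=0$ in $\KO(X)$. Since $\KO(X)$ is torsion free, $\xi=r(y)$, so $\xi$ lies in the image of $r$. By the characterisation recalled in the introduction, $\xi$ therefore admits a stable complex structure.

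There is no serious obstacle here. The only points that need care are that torsion-freeness of $\KO(X)$ is exactly what allows us to cancel the factor $2$ in the converse, and that triviality of conjugation (Lemma~\ref{lem:conj}) is exactly what is needed in the forward direction.
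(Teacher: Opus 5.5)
Your proof is correct and follows the paper's argument exactly. The forward direction uses $c(r(y))=y+\overline{y}=2y$ via Lemma~\ref{lem:conj}. The converse applies $r$ to obtain $2(\xi-r(y))=0$ and then cancels the factor $2$ using torsion-freeness of $\KO(X)$.
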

\begin{proof}
If $\xi=r(y)$ for some $y\in\Ku(X)$, then $c(\xi)=c(r(y))=y+\overline{y}=2y$
by \eqref{eq:rc} and Lemma~\ref{lem:conj}. Conversely, if $c(\xi)=2y$ for
some $y\in\Ku(X)$, then $2\xi=r(c(\xi))=2r(y)$ by \eqref{eq:rc}, so
$2(\xi-r(y))=0$; since $\KO(X)$ is torsion free, $\xi=r(y)$. Thus
$\xi\in\im(r)$ if and only if $c(\xi)\in 2\Ku(X)$.
\end{proof}
\section{\texorpdfstring{$K$}{}-theory of quaternionic projective spaces}\label{sec:HP}
In this section we compute $\Ku(\HP^{n})$ and $\KO(\HP^{n})$ with explicit
bases, together with the ring structures of $K(\HP^{n})$ and $KO(\HP^{n})$
(Propositions~\ref{prop:KHP} and~\ref{prop:KOHP}), determine the image of the
complexification map $c\colon\KO(\HP^{n})\to\Ku(\HP^{n})$
(Corollary~\ref{cor:imcHP}), and express the stable class of the tangent
bundle of $\HP^{n}$ with respect to these bases (Corollary~\ref{cor:THP}). Throughout this section $n\ge 1$ is an arbitrary integer; this generality is
needed because Proposition~\ref{prop:KOHP} is proved by induction on $n$, and
because Section~\ref{sec:M} applies Propositions~\ref{prop:KHP}
and~\ref{prop:KOHP} to $\HP^{n-1}$ as well as to $\HP^{n}$.

Let $\gamma$ be the tautological quaternionic line bundle over $\HP^{n}$
(see \cite[p.~243]{MS}); we regard its fibres as right $\HH$-modules. Right
multiplication by $i\in\HH$ makes $\gamma$ a complex vector bundle of rank
$2$ with structure group $Sp(1)=SU(2)$; in particular $c_{1}(\gamma)=0$. Let
$\gamma_{\R}$ be the underlying real vector bundle, of rank $4$, and let
$\ad\gamma$ be the real vector bundle of rank $3$ associated with the
principal $Sp(1)$-bundle of $\gamma$ via the adjoint representation of
$Sp(1)$ on its Lie algebra $\mathfrak{sp}(1)\cong\R^{3}$. Right multiplication by $j\in\HH$ is a conjugate-linear automorphism of
$\gamma$, so $\overline{\gamma}\cong\gamma$, and hence
\begin{equation}\label{eq:cx}
\gamma_{\R}\otimes_{\R}\C\cong\gamma\oplus\overline{\gamma}\cong 2\gamma.
\end{equation}

For $n\ge 2$ and the generator $u$ of $H^{4}(\HP^{n};\Z)$ fixed in
\eqref{eq:pHP}, we have
\begin{equation}\label{eq:pbund}
p(\gamma_{\R})=(1+u)^{2},\qquad p(\ad\gamma)=1+4u
\end{equation}
by \cite[Lemma~2.2 and Corollary~2.3]{Szczarba}; indeed, $\gamma_{\R}$ and
$\ad\gamma$ are the bundles denoted by $\hat{\xi}$ and $\beta(\xi)$ in
\cite{Szczarba} (for $\ad\gamma$, see \cite[Corollary~6.2]{Szczarba}), and
the generator of $H^{4}(\HP^{n};\Z)$ denoted by $v$ there coincides with $u$,
since $u$ is uniquely determined by \eqref{eq:pHP}. For $n=1$, we denote
the restriction to $\HP^{1}\subset\HP^{2}$ of the generator
$u\in H^{4}(\HP^{2};\Z)$ fixed in \eqref{eq:pHP} again by $u$; then
\eqref{eq:pbund} follows by naturality. On the other
hand, \eqref{eq:cx} and $c_{1}(\gamma)=0$ yield that
$p_{1}(\gamma_{\R})=-c_{2}(\gamma_{\R}\otimes_{\R}\C)=-2c_{2}(\gamma)$. Since
$H^{4}(\HP^{n};\Z)$ is torsion free, comparison with \eqref{eq:pbund} yields
\begin{equation}\label{eq:c2}
c_{2}(\gamma)=-u.
\end{equation}
Since $c_{1}(\gamma)=0$ and $c_{2}(\gamma)=-u$, the Chern roots of $\gamma$
are $\pm x$ with $x^{2}=u$.

We consider the classes
\begin{equation}\label{eq:gens}
\sigma:=\gamma-2\in\Ku(\HP^{n}),\qquad
\lambda:=\gamma_{\R}-4\in\KO(\HP^{n}),\qquad
\nu:=(\ad\gamma-3)-2\lambda\in\KO(\HP^{n}),
\end{equation}
and the power series
\begin{equation}\label{eq:g}
g(t):=\sum_{i\ge 1}\frac{2\,t^{i}}{(2i)!}\in\Q[[t]].
\end{equation}
Then
\begin{equation}\label{eq:chsigma}
\ch(\sigma)=e^{x}+e^{-x}-2=\sum_{i\ge 1}\frac{2\,x^{2i}}{(2i)!}
=\sum_{i\ge 1}\frac{2\,u^{i}}{(2i)!}=g(u).
\end{equation}
For every $m\ge 1$, the classes in \eqref{eq:gens} are defined over
$\HP^{m}$, and we denote them by the same symbols in each case. No ambiguity
arises, since the tautological quaternionic line bundle over $\HP^{m}$
restricts to that over $\HP^{m-1}$; consequently the inclusion
$\iota\colon\HP^{m-1}\to\HP^{m}$ satisfies
\[
\iota^{*}(\sigma)=\sigma,\qquad \iota^{*}(\lambda)=\lambda,\qquad
\iota^{*}(\nu)=\nu.
\]
\subsection{Complex \texorpdfstring{$K$}{}-theory}
Since $H^{*}(\HP^{n};\Z)$ is torsion-free and vanishes in odd degrees, the Atiyah--Hirzebruch spectral sequence for complex $K$-theory
\[
E_{2}^{p,q}=H^{p}(\HP^{n};K^{q}(\mathrm{pt}))\Longrightarrow K^{p+q}(\HP^{n})
\]
collapses at the $E_{2}$-page; that is, the differentials $d^{p,q}_{r}\colon E_{r}^{p,q}\to E_{r}^{p+r,\,q-r+1}$ vanish for all $r\ge 2$ and all $p,q$, yielding $E_{2}^{p,q}=E_{3}^{p,q}=\dots=E_{\infty}^{p,q}$ \cite[p.~18]{AH}. By Bott periodicity~\cite{Bott}, $K^{q}(\mathrm{pt})\cong\Z$ for even $q$ and $K^{q}(\mathrm{pt})=0$ for odd $q$. Consequently, the $E_{2}$-terms are free abelian and vanish whenever $p+q$ is odd, while the nonzero $E_{2}$-terms of total degree $0$ are precisely $E_{2}^{4j,-4j}= H^{4j}(\HP^{n};\Z)\cong\Z$ for $0\le j\le n$. For every integer $s$, the $E_{\infty}$-terms of total degree $s$ are the successive quotients of the skeletal filtration of $K^{s}(\HP^{n})$, whose $p$-th term is the kernel of the restriction map to the $(p-1)$-skeleton of $\HP^{n}$ in its standard cell structure. Hence, $K^{-1}(\HP^{n})=0$, and $K(\HP^{n})$ is free abelian of rank $n+1$. The following proposition determines a basis of $\Ku(\HP^{n})$; this basis
is used in the proofs of Propositions~\ref{prop:KOHP} and~\ref{prop:KM}.
\begin{proposition}\label{prop:KHP}
The group $\Ku(\HP^{n})$ is free abelian with basis
$\{\sigma,\sigma^{2},\dots,\sigma^{n}\}$, and $\sigma^{n+1}=0$. Consequently,
$K(\HP^{n})\cong\Z[\sigma]/(\sigma^{n+1})$ as rings.
\end{proposition}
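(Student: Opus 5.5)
We will prove that $\{\sigma,\dots,\sigma^{n}\}$ is a basis by exploiting the Chern character computation \eqref{eq:chsigma} together with the facts, established just before the statement, that $K(\HP^{n})$ is free abelian of rank $n+1$ and that its skeletal filtration has successive quotients $E_\infty^{4j,-4j}\cong H^{4j}(\HP^{n};\Z)\cong\Z$. The plan is to show that $\sigma^{j}$ lies in the $4j$-th filtration term and projects to a generator of the corresponding quotient $H^{4j}(\HP^{n};\Z)$, up to sign. A standard triangularity argument then shows that $\{1,\sigma,\dots,\sigma^{n}\}$ is a $\Z$-basis of $K(\HP^{n})$, so $\{\sigma,\dots,\sigma^{n}\}$ is a basis of $\Ku(\HP^{n})$.

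First, $\ch$ is a ring homomorphism, so \eqref{eq:chsigma} gives
\[
\ch(\sigma^{j})=g(u)^{j}=u^{j}+(\text{terms in }u^{j+1},\dots,u^{n}),
\]
because $g(t)=t+t^{2}/12+\cdots$ has leading coefficient $2/2!=1$. In particular $\ch(\sigma^{n+1})=g(u)^{n+1}=0$, since $u^{n+1}=0$. As $K(\HP^{n})$ is torsion free, $\ch$ is injective \cite[p.~19]{AH}, and therefore $\sigma^{n+1}=0$.

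Next, we check that $\sigma^{j}$ restricts to zero on the $(4j-1)$-skeleton $\HP^{j-1}$. Its Chern character there is $g(u)^{j}$ computed in $H^{*}(\HP^{j-1};\Q)$, where $u^{j}=0$, so it vanishes. Since $K(\HP^{j-1})$ is torsion free, injectivity of $\ch$ again gives $\sigma^{j}|_{\HP^{j-1}}=0$, using the compatibility $\iota^{*}\sigma=\sigma$. Hence $\sigma^{j}$ lies in the filtration term $F^{4j}$.

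The step requiring the most care is identifying the image of $\sigma^{j}$ in $F^{4j}/F^{4j+1}\cong H^{4j}(\HP^{n};\Z)$ with $\pm u^{j}$. This follows from the standard compatibility of the Atiyah--Hirzebruch edge identification with the Chern character: for an element of $F^{2q}$, the class in $E_\infty^{2q,-2q}=H^{2q}(X;\Z)$ maps into rational cohomology as the lowest-degree component of $\ch$ \cite{AH}, up to a universal sign convention. Since the lowest-degree component of $\ch(\sigma^{j})$ is $u^{j}$, which generates $H^{4j}(\HP^{n};\Z)$, the claim follows. If one prefers to avoid invoking this compatibility, there is an alternative argument. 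Suppose $y\in K(\HP^{n})$ is any element. Then, by the filtration, $y$ lies in $\Z$-span$\{1,\sigma,\dots,\sigma^{j-1}\}+F^{4j}$ inductively, and each element $z\in F^{4j}$ has $\ch(z)$ beginning in degree $4j$ with an \emph{integral} leading coefficient, because $z$ is pulled back from $\HP^{n}/\HP^{j-1}$, whose bottom cell is $S^{4j}$ and where $\ch$ is integral. One then subtracts the appropriate multiple of $\sigma^{j}$ and continues.

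Finally, we run the triangular induction. Given $y\in\Ku(\HP^{n})$, subtract $a_{1}\sigma$ so that the result lies in $F^{8}$, then subtract $a_{2}\sigma^{2}$, and so on until the remainder lies in $F^{4n+4}=0$. This shows that the classes $\sigma^{j}$ span $\Ku(\HP^{n})$. Linear independence is immediate from the unitriangular shape of $\ch(\sigma^{j})$ in the basis $u^{j}$. Combined with $\sigma^{n+1}=0$, this yields the ring isomorphism $K(\HP^{n})\cong\Z[\sigma]/(\sigma^{n+1})$.
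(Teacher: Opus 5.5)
Your argument is correct, but it is organised differently from the paper's.

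The paper inducts on $n$ and only ever deals with the top cell. From the cofibration $\HP^{n-1}\to\HP^{n}\to\mathbb{S}^{4n}$ it obtains the short exact sequence \eqref{fvguef}. It then notes that $\sigma^{n},\sigma^{n+1}\in\ker\iota^{*}=\Z\,\pi^{*}b$. Comparing $\ch(\sigma^{n})=u^{n}$ with $\ch(\pi^{*}b)=\pm u^{n}$ gives $\sigma^{n}=\pm\pi^{*}b$ and $\sigma^{n+1}=0$, and the basis follows from splitting.

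You instead treat the whole skeletal filtration of $\HP^{n}$ at once. You show $\sigma^{j}\in F^{4j}$ by $\ch$-injectivity on $K(\HP^{j-1})$, identify its image in $F^{4j}/F^{4j+1}\cong H^{4j}(\HP^{n};\Z)$ as $\pm u^{j}$, and finish by unitriangular peeling.

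Your main route is non-inductive. The price is that it rests on the compatibility of the Atiyah--Hirzebruch spectral sequence with the Chern character, a heavier black box (with sign conventions). The paper needs only the single fact that $\ch$ maps $\Ku(\mathbb{S}^{4n})$ onto the integral lattice. The paper's route also yields the identification $\sigma^{n}=\pm\pi^{*}b$, which is reused in the proof of Proposition~\ref{prop:KOHP}.

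Your fallback argument is essentially the paper's integrality argument applied to the bottom cell of $\HP^{n}/\HP^{j-1}$ instead of the top cell of $\HP^{n}$. Two steps should be stated explicitly to complete it. First, the degree-$4j$ component of $\ch$ on $\HP^{n}/\HP^{j-1}$ is detected on the bottom cell $\mathbb{S}^{4j}$, since restriction is an isomorphism on $H^{4j}$. Second, after subtracting $a\sigma^{j}$ the remainder has $\ch$ vanishing through degree $4j$. It therefore restricts to zero on $\HP^{j}$ by $\ch$-injectivity on $K(\HP^{j})$, i.e.\ it lies in $F^{4j+4}$.
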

\begin{proof}
For every $d\ge 1$, Bott periodicity \cite{Bott} yields
$\Ku^{1}(\mathbb{S}^{4d})=0$ and $\Ku(\mathbb{S}^{4d})\cong\Z$, and the Chern
character maps $\Ku(\mathbb{S}^{4d})$ isomorphically onto the image of
$H^{4d}(\mathbb{S}^{4d};\Z)$ in $H^{4d}(\mathbb{S}^{4d};\Q)$
\cite[Proposition~2.2]{atiyahtodd}. We prove by induction on $n$ that
$\{\sigma,\dots,\sigma^{n}\}$ is a basis of $\Ku(\HP^{n})$ and that
$\sigma^{n+1}=0$.

For $n=1$, the Chern character maps $\Ku(\HP^{1})$ isomorphically onto the
image of $H^{4}(\HP^{1};\Z)=\Z u$ in $H^{4}(\HP^{1};\Q)$, as $\HP^{1}$ is
homeomorphic to $\mathbb{S}^{4}$. Since $\ch$ is multiplicative and $u^{2}=0$,
\eqref{eq:chsigma} gives $\ch(\sigma)=g(u)=u$ and
$\ch(\sigma^{2})=g(u)^{2}=0$. Hence $\{\sigma\}$ is a basis of $\Ku(\HP^{1})$,
and $\sigma^{2}=0$ because $\ch$ is injective on $\Ku(\HP^{1})$.

Let $n\ge 2$, and assume that the assertion holds for $\HP^{n-1}$. Let
$\iota\colon\HP^{n-1}\to\HP^{n}$ be the inclusion, and let
$\pi\colon\HP^{n}\to\mathbb{S}^{4n}$ be the degree one collapse map. Since $\Ku^{-1}(\HP^{n-1})\subseteq K^{-1}(\HP^{n-1})=0$, as observed
above, and $\Ku^{1}(\mathbb{S}^{4n})=0$, the long exact sequence in reduced
$K$-theory associated with the cofibration
$\HP^{n-1}\xrightarrow{\iota}\HP^{n}\xrightarrow{\pi}\mathbb{S}^{4n}$ yields the following
short exact sequence
\begin{equation}\label{fvguef}
0\to\Ku(\mathbb{S}^{4n})\xrightarrow{\pi^{*}}\Ku(\HP^{n})
\xrightarrow{\iota^{*}}\Ku(\HP^{n-1})\to 0.
\end{equation}
As the tautological quaternionic line bundle over $\HP^{n}$ restricts to
that over $\HP^{n-1}$, we have $\iota^{*}(\sigma^{j})=\sigma^{j}$ for all
$j\ge 1$.
Hence, by the induction hypothesis, $\iota^{*}$ maps
$\{\sigma,\dots,\sigma^{n-1}\}$ to the basis $\{\sigma,\dots,\sigma^{n-1}\}$ of
$\Ku(\HP^{n-1})$ and maps $\sigma^{n}$ and $\sigma^{n+1}$ to $0$, so that
$\sigma^{n},\sigma^{n+1}\in\ker\iota^{*}=\im\pi^{*}$. Denote a generator
of $\Ku(\mathbb{S}^{4n})\cong\Z$ by $b$. Because $\im\pi^{*}$ is generated by $\pi^{*}b$, there are integers $k$ and
$k'$ such that
\begin{equation}\label{eq:sigmapi}
\sigma^{n}=k\,\pi^{*}b\qquad\text{and}\qquad\sigma^{n+1}=k'\,\pi^{*}b.
\end{equation}
Since $\ch(b)$ is the image of a generator of $H^{4n}(\mathbb{S}^{4n};\Z)$ in
$H^{4n}(\mathbb{S}^{4n};\Q)$ and
$\pi^{*}\colon H^{4n}(\mathbb{S}^{4n};\Z)\to H^{4n}(\HP^{n};\Z)=\Z u^{n}$ is
an isomorphism, we obtain, by the naturality of the Chern character,
\[
\ch(\pi^{*}b)=\pi^{*}\ch(b)=\pm u^{n}.
\]
On the other hand, by the multiplicativity of the Chern character,
\eqref{eq:chsigma} and $u^{n+1}=0$ yield
\[
\ch(\sigma^{n})=g(u)^{n}=u^{n}\qquad\text{and}\qquad
\ch(\sigma^{n+1})=g(u)^{n+1}=0.
\]
Applying $\ch$ to \eqref{eq:sigmapi} therefore yields
\[
u^{n}=\pm k\,u^{n}\qquad\text{and}\qquad 0=\pm k'\,u^{n}.
\]
As $u^{n}\neq 0$ in $H^{4n}(\HP^{n};\Q)$, it follows that $k=\pm1$ and
$k'=0$. Thus $\sigma^{n}$ generates $\ker\iota^{*}$, and $\sigma^{n+1}=0$.
Since $\Ku(\HP^{n-1})$ is free, the short exact sequence~\eqref{fvguef}
splits; hence $\{\sigma,\dots,\sigma^{n-1}\}\cup\{\sigma^{n}\}$ is a basis of $\Ku(\HP^{n})$.

Consequently, $\sigma^{n+1}=0$ yields $K(\HP^{n})\cong\Z[\sigma]/(\sigma^{n+1})$
as rings.
\end{proof}
\subsection{Real \texorpdfstring{$K$}{}-theory}
It is shown in \cite{nao}, by means of the Atiyah-Hirzebruch spectral
sequence for real $K$-theory, that $\KO(\HP^{n})$ is free abelian of rank
$n$. However, no explicit basis is computed there. An explicit basis is
needed in the proof of Proposition~\ref{prop:imcM}, and is provided by
Proposition~\ref{prop:KOHP} below. We first record the
complexifications of the classes $\lambda$ and $\nu$.
\begin{lemma}\label{lem:cgens}
The classes $\lambda,\nu\in\KO(\HP^{n})$ satisfy $c(\lambda)=2\sigma$ and
$c(\nu)=\sigma^{2}$.
\end{lemma}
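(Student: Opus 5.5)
The plan is to reduce both identities to statements about complex representations of $Sp(1)=SU(2)$, since $\lambda$ and $\nu$ are built from bundles associated with the principal $Sp(1)$-bundle of $\gamma$.

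\emph{The class $\lambda$.} By \eqref{eq:cx} we have $c(\gamma_{\R})=\gamma_{\R}\otimes_{\R}\C\cong 2\gamma$. The complexification of the trivial rank-$4$ bundle is trivial of complex rank $4$. Hence
\[
c(\lambda)=2\gamma-4=2(\gamma-2)=2\sigma .
\]

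\emph{The class $\nu$.} By additivity, $c(\nu)=c(\ad\gamma-3)-2c(\lambda)=c(\ad\gamma)-3-4\sigma$, so it suffices to identify $\ad\gamma\otimes_{\R}\C$. The complexified adjoint representation of $SU(2)$ on $\mathfrak{sp}(1)\otimes\C\cong\mathfrak{sl}_{2}(\C)$ is the irreducible $3$-dimensional representation $\mathrm{Sym}^{2}V$, where $V=\C^{2}$ is the standard representation. The Clebsch--Gordan decomposition gives
\[
V\otimes V\cong \mathrm{Sym}^{2}V\oplus\Lambda^{2}V,
\]
and $\Lambda^{2}V$ is trivial because $\det=1$ on $SU(2)$. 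Passing to associated bundles (the vector bundle associated to $V$ is $\gamma$), this yields $\gamma\otimes\gamma\cong(\ad\gamma\otimes_{\R}\C)\oplus\underline{\C}$. Therefore
\[
c(\ad\gamma)=\gamma^{2}-1=(\sigma+2)^{2}-1=\sigma^{2}+4\sigma+3,
\]
and hence $c(\nu)=\sigma^{2}+4\sigma+3-3-4\sigma=\sigma^{2}$.

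\emph{Main obstacle.} The only delicate point is making sure that right multiplication by $i$ identifies the associated bundle of $V$ with the complex structure on $\gamma$, so that the representation identity transfers to bundles. This is really only an issue of conjugation conventions, and it is harmless here because $\overline{V}\cong V$ for $SU(2)$.

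\emph{Cross-check via the Chern character.} Alternatively, one can verify $c(\nu)=\sigma^{2}$ as follows. The Chern roots of $\gamma$ are $\pm x$, so $\ch(\ad\gamma\otimes\C)=e^{2x}+1+e^{-2x}$. One then computes
\[
\ch\big(c(\nu)\big)=e^{2x}+e^{-2x}-2-4\,g(u)=(e^{x}+e^{-x}-2)^{2}=g(u)^{2}=\ch(\sigma^{2}).
\]
Since $\Ku(\HP^{n})$ is torsion free by Proposition~\ref{prop:KHP}, $\ch$ is injective on it, and the equality of Chern characters gives $c(\nu)=\sigma^{2}$. This route avoids representation-theory conventions entirely, so I would present it as the main proof if the bundle identification becomes awkward.
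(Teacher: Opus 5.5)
Your proof is correct. The computation of $c(\lambda)$ is the same as the paper's, but your main argument for $c(\nu)$ takes a different route.

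The paper never identifies $\ad\gamma\otimes_{\R}\C$ as a bundle. It takes Szczarba's value $p_{1}(\ad\gamma)=4u$ from \eqref{eq:pbund}, together with $H^{2}(\HP^{n};\Z)=H^{6}(\HP^{n};\Z)=0$, to get total Chern class $1-4u=(1+2x)(1-2x)$. From this it reads off the Chern roots $2x,0,-2x$, checks that $\ch(\ad\gamma\otimes_{\R}\C)=\ch(\gamma^{2}-1)$, and concludes by injectivity of $\ch$ on the torsion-free group $K(\HP^{n})$.

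Your Clebsch--Gordan argument instead gives an honest isomorphism $\gamma\otimes_{\C}\gamma\cong(\ad\gamma\otimes_{\R}\C)\oplus\underline{\C}$. This holds for the associated bundles of any principal $Sp(1)$-bundle over any base. It needs no characteristic-class input, no torsion-freeness or injectivity of $\ch$, and no Pontryagin class cited from Szczarba. The paper's route, in exchange, stays within the characteristic-class data it has already set up and avoids identifying the complexified adjoint representation.

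One caveat concerns your ``cross-check''. The Chern roots of $\ad\gamma\otimes_{\R}\C$ do not follow from those of $\gamma$ alone, so the claimed formula $\ch(\ad\gamma\otimes_{\R}\C)=e^{2x}+1+e^{-2x}$ needs an extra input. That input is either the weights $2,0,-2$ of $\mathrm{Sym}^{2}V$, which is the representation theory you said this route avoids. Or it is $p_{1}(\ad\gamma)=4u$ together with the vanishing of $H^{2}$ and $H^{6}$, which is exactly the paper's argument. With that input supplied, your cross-check is the paper's proof.
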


\begin{proof}
By \eqref{eq:cx}, $c(\lambda)=2\gamma-4=2\sigma$. To compute $c(\nu)$,
observe that the complex vector bundle $(\ad\gamma)\otimes_{\R}\C$ has
rank $3$ and that its odd Chern classes lie in the groups
$H^{4i+2}(\HP^{n};\Z)=0$. Since moreover
$c_{2}((\ad\gamma)\otimes_{\R}\C)=-p_{1}(\ad\gamma)=-4u$ by \eqref{eq:pbund},
we obtain
\[
c\big((\ad\gamma)\otimes_{\R}\C\big)=1-4u=(1+2x)(1-2x),
\]
where $\pm x$ are the Chern roots of $\gamma$, with $x^{2}=u$. Thus $(\ad\gamma)\otimes_{\R}\C$ has Chern roots $2x,0,-2x$, and
therefore
\[
\ch\big((\ad\gamma)\otimes_{\R}\C\big)=e^{2x}+1+e^{-2x}
=\big(e^{x}+e^{-x}\big)^{2}-1=\ch(\gamma^{2}-1).
\]
Since $K(\HP^{n})$ is torsion-free, the Chern
character map is injective on $K(\HP^{n})$ \cite[p.~19]{AH}. Therefore
$(\ad\gamma)\otimes_{\R}\C=\gamma^{2}-1$ in $K(\HP^{n})$, that is,
$c(\ad\gamma-3)=\gamma^{2}-4$, and consequently
\[
c(\nu)=c(\ad\gamma-3)-2c(\lambda)=\gamma^{2}-4-4\sigma=\gamma^{2}-4-4(\gamma-2)=(\gamma-2)^{2}
=\sigma^{2}.
\]
\end{proof}
Recall that, by Bott periodicity \cite{Bott}, the coefficient groups of real
$K$-theory are
\begin{equation}\label{eq:KOpt}
KO^{-i}(\mathrm{pt})\cong
\begin{cases}
\Z, & i\equiv 0,4\pmod 8,\\
\Z/2, & i\equiv 1,2\pmod 8,\\
0, & i\equiv 3,5,6,7\pmod 8,
\end{cases}
\end{equation}
and that, for every $j\ge 1$, the complexification map
\begin{equation}\label{eq:bott}
c\colon KO^{-4j}(\mathrm{pt})\cong\Z\longrightarrow
K^{-4j}(\mathrm{pt})\cong\Z
\quad\text{is multiplication by }
a_{j}:=\begin{cases} 2, & j \text{ odd},\\ 1, & j \text{ even}\end{cases}
\end{equation}
(see the proof of \cite[Corollary~5.2]{adamsvect}). We can now determine
$\KO(\HP^{n})$ together with an explicit basis.
\begin{proposition}\label{prop:KOHP}
The group $\KO(\HP^{n})$ is free abelian with basis
\[
\big\{\,\nu^{b}\ :\ 1\le 2b\le n\,\big\}\ \cup\
\big\{\,\lambda\nu^{b}\ :\ 1\le 2b+1\le n\,\big\}.
\]
Moreover, $KO(\HP^{n})\cong\Z[\lambda,\nu]/(\lambda^{2}-4\nu,\,
\nu^{\lfloor n/2\rfloor+1},\,\lambda\nu^{\lceil n/2\rceil})$ as rings, where
$\lfloor x\rfloor$ denotes the greatest integer not exceeding $x$, and
$\lceil x\rceil$ the least integer not less than $x$.
\end{proposition}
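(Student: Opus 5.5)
We argue by induction on $n$, in parallel with Proposition~\ref{prop:KHP}, but we detect elements through the complexification map $c$ rather than the Chern character. The basis is then read off from the images computed in Lemma~\ref{lem:cgens}. Put $e_{2b}:=\nu^{b}$ and $e_{2b+1}:=\lambda\nu^{b}$. Since $c$ is a ring homomorphism, Lemma~\ref{lem:cgens} gives $c(e_{j})=a_{j}\sigma^{j}$, with $a_{j}=2$ for odd $j$ and $a_{j}=1$ for even $j$. Also $c(\lambda^{2}-4\nu)=4\sigma^{2}-4\sigma^{2}=0$.

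We first show that $c$ is injective on $\KO(\HP^{n})$. Since $\KO(\HP^{n})$ is free by \cite{nao}, and $rc=2$ by \eqref{eq:rc}, we have $2\ker c=0$, so $\ker c=0$. Alternatively, this follows inductively from the five lemma below. Injectivity then turns the relations into identities that can be checked after applying $c$. We get $\lambda^{2}=4\nu$. Moreover, $c(\nu^{\lfloor n/2\rfloor+1})=\sigma^{2\lfloor n/2\rfloor+2}=0$ and $c(\lambda\nu^{\lceil n/2\rceil})=2\sigma^{2\lceil n/2\rceil+1}=0$, by Proposition~\ref{prop:KHP}, since both exponents exceed $n$.

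For the basis, we use the cofibration $\HP^{n-1}\to\HP^{n}\to\mathbb{S}^{4n}$ and the long exact sequence in $\KO$. We need $\KO^{-1}(\HP^{n-1})=0$, which holds because the cells of $\HP^{n-1}$ lie in degrees $\equiv 0\pmod 4$ and $KO^{-1-4j}(\mathrm{pt})=0$ for $4j\equiv 0,4\pmod 8$, by the Atiyah--Hirzebruch spectral sequence; alternatively we quote \cite{nao}. We also need $\KO^{1}(\mathbb{S}^{4n})=KO^{-4n+1}(\mathrm{pt})=0$, by \eqref{eq:KOpt}. These give the short exact sequence
\[
0\to\KO(\mathbb{S}^{4n})\xrightarrow{\pi^{*}}\KO(\HP^{n})\xrightarrow{\iota^{*}}\KO(\HP^{n-1})\to 0,
\]
and compatible restrictions reduce the claim to showing that $e_{n}$ generates $\im\pi^{*}$.

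This is the key step. Since $\iota^{*}e_{n}=0$, we can write $e_{n}=k\,\pi^{*}b_{\R}$, where $b_{\R}$ generates $\KO(\mathbb{S}^{4n})\cong\Z$. Applying $c$ and using naturality together with \eqref{eq:bott}, we get $c(\pi^{*}b_{\R})=\pi^{*}(a_{n}b)=\pm a_{n}\sigma^{n}$, because $\pi^{*}b=\pm\sigma^{n}$ by the proof of Proposition~\ref{prop:KHP}. On the other hand, $c(e_{n})=a_{n}\sigma^{n}$. Hence $a_{n}\sigma^{n}=\pm k\,a_{n}\sigma^{n}$, and since $\sigma^{n}$ has infinite order, $k=\pm1$. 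The sequence splits because $\KO(\HP^{n-1})$ is free, so the basis of $\KO(\HP^{n-1})$ (from the induction hypothesis) together with $e_{n}$ is a basis of $\KO(\HP^{n})$.

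The base case $n=1$ is the same argument with $\HP^{0}$ a point, so $\KO(\HP^{1})=\Z\lambda$. Note that $4u$ is not a primitive generator of $H^{4}$, but this does not matter here because we work through $c$. The main subtlety is precisely that the parity factor $a_{n}$ appears on both sides and cancels.

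Finally, the ring presentation follows. The map $\Z[\lambda,\nu]\to KO(\HP^{n})$ kills the three relations, as shown above. It is surjective because its image contains the basis. The quotient ring is spanned as a group by the monomials $\nu^{b}$ and $\lambda\nu^{b}$ in the stated ranges, since $\lambda^{2}$ can always be replaced by $4\nu$. These monomials map bijectively onto a basis, so the map is an isomorphism.
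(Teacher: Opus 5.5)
Your strategy is the paper's: induction along $\HP^{n-1}\xrightarrow{\iota}\HP^{n}\xrightarrow{\pi}\mathbb{S}^{4n}$, a short exact sequence in $\KO$, identification of the new generator $e_{n}$ (the paper's $\theta_{n}$) by complexifying so that the factor $a_{n}$ of \eqref{eq:bott} cancels, splitting, and the ring relations via injectivity of $c$. Two small differences from the paper are harmless. You write $e_{n}=k\,\pi^{*}b_{\R}$ and compare complexifications in $\Ku(\HP^{n})$, whereas the paper invokes injectivity of $c$; this is cosmetic. Your spanning-set argument for the ring presentation fills in what the paper leaves to ``consequently''.

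There is, however, one false claim: $\KO^{-1}(\HP^{n-1})$ is \emph{not} zero in general. Your reading of \eqref{eq:KOpt} is right for $4j\equiv 4\pmod 8$ but wrong for $4j\equiv 0\pmod 8$. In that case $1+4j\equiv 1\pmod 8$, so $KO^{-1-4j}(\mathrm{pt})\cong\Z/2$.

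Concretely, take $n=3$ and the cofibration $\mathbb{S}^{4}\to\HP^{2}\to\mathbb{S}^{8}$. The flanking groups $\KO^{-2}(\mathbb{S}^{4})\cong KO^{-6}(\mathrm{pt})$ and $\KO^{-1}(\mathbb{S}^{4})\cong KO^{-5}(\mathrm{pt})$ vanish. Hence $\KO^{-1}(\HP^{2})\cong\KO^{-1}(\mathbb{S}^{8})\cong KO^{-9}(\mathrm{pt})\cong\Z/2$. Quoting \cite{nao}, as the paper does, only gives that this group is $2$-torsion, not that it vanishes.

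What you actually need is that the connecting map $\KO^{-1}(\HP^{n-1})\to\KO(\mathbb{S}^{4n})\cong\Z$ is zero. That does hold, because the source is torsion: every $E_{2}$-term of total degree $-1$ is $0$ or $\Z/2$. This is exactly the paper's argument.

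Even simpler, your own key computation $c(\pi^{*}b_{\R})=\pm a_{n}\sigma^{n}\neq 0$ already shows that $\pi^{*}$ is injective on $\KO(\mathbb{S}^{4n})\cong\Z$. You should also justify $\iota^{*}e_{n}=0$, which you leave implicit. It follows from $c(\iota^{*}e_{n})=a_{n}\sigma^{n}=0$ in $\Ku(\HP^{n-1})$ and injectivity of $c$ on the free group $\KO(\HP^{n-1})$ given by the induction hypothesis. With these two repairs your argument no longer needs \cite{nao} at all. The paper, by contrast, relies on \cite{nao} for the ranks and for the torsion of $\KO^{-1}(\HP^{n-1})$.
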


\begin{proof}
The assertion about the basis is proved by induction on $n$; the ring
structure is determined thereafter. For $n=1$ we have
$\HP^{1}\cong\mathbb{S}^{4}$, so $\KO(\HP^{1})\cong\Z$ by \eqref{eq:KOpt},
and the complexification map
$c\colon\KO(\HP^{1})\to\Ku(\HP^{1})$ is injective with image
$2\Ku(\HP^{1})$, by \eqref{eq:bott}. Since
$c(\lambda)=2\sigma$ by Lemma~\ref{lem:cgens} and $\sigma$ generates
$\Ku(\HP^{1})$ by Proposition~\ref{prop:KHP}, the element $c(\lambda)$
generates $\im c$; hence $\lambda$ generates $\KO(\HP^{1})$.

Let $n\ge 2$, and assume that the assertion holds for $\HP^{n-1}$. Let
$\pi\colon\HP^{n}\to\mathbb{S}^{4n}$ be the degree one collapse map. Since $4n-1\equiv 3$ or $7\pmod 8$, \eqref{eq:KOpt} gives
$\KO^{1}(\mathbb{S}^{4n})\cong KO^{-(4n-1)}(\mathrm{pt})=0$; moreover,
$\KO^{-1}(\HP^{n-1})$ is a $2$-torsion group \cite{nao}, so any 
homomorphism $\KO^{-1}(\HP^{n-1})\to\KO(\mathbb{S}^{4n})\cong\Z$ is trivial.
Hence the cofibration $\HP^{n-1}\xrightarrow{\iota}\HP^{n}
\xrightarrow{\pi}\mathbb{S}^{4n}$ yields a short exact sequence
\begin{equation}\label{eq:seqHP}
0\to\KO(\mathbb{S}^{4n})\xrightarrow{\ \pi^{*}\ }\KO(\HP^{n})
\xrightarrow{\ \iota^{*}\ }\KO(\HP^{n-1})\to 0 .
\end{equation}
By \cite{nao}, the groups $\KO(\HP^{n})$ and $\KO(\HP^{n-1})$ are free
abelian of ranks $n$ and $n-1$, so $\ker\iota^{*}$ is infinite cyclic; by
exactness it equals $\im\pi^{*}$, and is therefore generated by $\pi^{*}z$,
where $z$ generates $\KO(\mathbb{S}^{4n})\cong\Z$. Since $\KO(\HP^{n})$ is free and
$c(\eta)=0$ implies $2\eta=r(c(\eta))=0$ by \eqref{eq:rc}, the map $c$ is
injective on $\KO(\HP^{n})$.

In the proof of Proposition~\ref{prop:KHP} it was shown that
$\sigma^{n}=\pm\pi^{*}b$ for a generator $b$ of $\Ku(\mathbb{S}^{4n})$. By
\eqref{eq:bott}, $c(z)=\pm a_{n}b$, and therefore, by the naturality of $c$,
\[
c(\pi^{*}z)=\pi^{*}c(z)=\pm a_{n}\sigma^{n}.
\]
Put
\[
\theta_{n}:=
\begin{cases}
\nu^{n/2}, & n\ \text{even},\\
\lambda\nu^{(n-1)/2}, & n\ \text{odd}.
\end{cases}
\]
Since $c$ is multiplicative, Lemma~\ref{lem:cgens} gives
\[
c(\theta_{n})=
\begin{cases}
\sigma^{n}, & n\ \text{even},\\
2\sigma^{n}, & n\ \text{odd},
\end{cases}
\]
so that $c(\theta_{n})=\pm c(\pi^{*}z)$, by the values of $a_{n}$ in
\eqref{eq:bott}. As $c$ is injective, $\theta_{n}=\pm\pi^{*}z$, and hence
$\theta_{n}$ generates $\ker\iota^{*}$. By the induction hypothesis, together with $\iota^{*}(\lambda)=\lambda$ and
$\iota^{*}(\nu)=\nu$, the collection
\[
\big\{\,\iota^{*}\nu^{b}\ :\ 1\le 2b\le n-1\,\big\}\ \cup\
\big\{\,\iota^{*}\lambda\nu^{b}\ :\ 1\le 2b+1\le n-1\,\big\}
\]
is a basis of $\KO(\HP^{n-1})$. Since $\KO(\HP^{n-1})$ is free abelian, the
short exact sequence \eqref{eq:seqHP} splits, and therefore
\[
\big\{\,\nu^{b}\ :\ 1\le 2b\le n-1\,\big\}\ \cup\
\big\{\,\lambda\nu^{b}\ :\ 1\le 2b+1\le n-1\,\big\}\ \cup\ \{\theta_{n}\}
\]
is a basis of $\KO(\HP^{n})$, which is the set asserted in the proposition.


It remains to determine the ring structure of $KO(\HP^{n})$. Since $c$ is
injective on $\KO(\HP^{n})$, the identities $c(\lambda^{2})=4\sigma^{2}=c(4\nu)$ and
$c(\nu^{b})=\sigma^{2b}$, $c(\lambda\nu^{b})=2\sigma^{2b+1}$, together with
$\sigma^{j}=0$ for $j>n$ (Proposition~\ref{prop:KHP}), yield
\[
\lambda^{2}=4\nu,\qquad \nu^{\lfloor n/2\rfloor+1}=0,\qquad
\lambda\nu^{\lceil n/2\rceil}=0 .
\]
Consequently,
$KO(\HP^{n})\cong\Z[\lambda,\nu]/(\lambda^{2}-4\nu,\,
\nu^{\lfloor n/2\rfloor+1},\,\lambda\nu^{\lceil n/2\rceil})$.
\end{proof}
\begin{corollary}\label{cor:imcHP}
The image of the complexification map
$c\colon\KO(\HP^{n})\to\Ku(\HP^{n})$ is
\begin{equation}\label{eq:imcHP}
\im c=\Big\{\ \sum_{j=1}^{n}A_{j}\sigma^{j}\ :\
A_{j}\in\Z\ \text{even for all odd } j\ \Big\}.
\end{equation}
\end{corollary}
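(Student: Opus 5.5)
The plan is to read the image directly off the explicit basis in Proposition~\ref{prop:KOHP}, using the multiplicativity of $c$ together with Lemma~\ref{lem:cgens}. Since $\KO(\HP^{n})$ is free abelian with basis $\{\nu^{b}: 1\le 2b\le n\}\cup\{\lambda\nu^{b}: 1\le 2b+1\le n\}$, the image $\im c$ is the subgroup of $\Ku(\HP^{n})$ generated by the images of these basis elements. Because $c$ is a ring homomorphism, Lemma~\ref{lem:cgens} gives
\[
c(\nu^{b})=c(\nu)^{b}=\sigma^{2b},\qquad c(\lambda\nu^{b})=c(\lambda)\,c(\nu)^{b}=2\sigma^{2b+1}.
\]
Thus, as $j$ ranges over $1,\dots,n$, the generators of $\im c$ are exactly $\sigma^{j}$ for even $j$ and $2\sigma^{j}$ for odd $j$, each power $\sigma^{j}$ with $1\le j\le n$ occurring once.

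Next I will express an arbitrary element of $\KO(\HP^{n})$ in this basis and apply $c$. For
\[
\eta=\sum_{2b\le n}\beta_{b}\nu^{b}+\sum_{2b+1\le n}\alpha_{b}\lambda\nu^{b},
\]
we get
\[
c(\eta)=\sum_{2b\le n}\beta_{b}\sigma^{2b}+\sum_{2b+1\le n}2\alpha_{b}\sigma^{2b+1}.
\]
By Proposition~\ref{prop:KHP}, $\{\sigma,\dots,\sigma^{n}\}$ is a basis of $\Ku(\HP^{n})$, so the coefficients $A_{j}$ are uniquely determined. For odd $j$ we have $A_{j}=2\alpha_{(j-1)/2}$, which is even; for even $j$, $A_{j}=\beta_{j/2}$ can be any integer. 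This gives the inclusion $\subseteq$ in \eqref{eq:imcHP}. Conversely, given integers $A_{j}$ with $A_{j}$ even for odd $j$, set $\beta_{b}=A_{2b}$ and $\alpha_{b}=A_{2b+1}/2$. The resulting $\eta$ satisfies $c(\eta)=\sum_{j}A_{j}\sigma^{j}$, which gives $\supseteq$.

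No real obstacle is expected. The only point to check is that the two index ranges in the basis of Proposition~\ref{prop:KOHP} together cover each exponent $j\in\{1,\dots,n\}$ exactly once: even $j=2b$ with $1\le 2b\le n$, and odd $j=2b+1$ with $1\le 2b+1\le n$. This ensures the description is complete and uses no powers beyond $\sigma^{n}$, consistent with $\sigma^{n+1}=0$.
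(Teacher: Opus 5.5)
Your proposal is correct and follows the paper's proof. The paper likewise reads off $\im c$ from the basis of Proposition~\ref{prop:KOHP}, using Lemma~\ref{lem:cgens} and the multiplicativity of $c$ to obtain the generators $\sigma^{j}$ for even $j$ and $2\sigma^{j}$ for odd $j$; your coefficient comparison in the basis of Proposition~\ref{prop:KHP} simply spells out the final step that the paper leaves implicit.
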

\begin{proof}
By Proposition~\ref{prop:KOHP} and Lemma~\ref{lem:cgens}, the image of $c$ is
generated by
\[
c(\nu^{b})=\sigma^{2b}\quad (2b\le n),\qquad
c(\lambda\nu^{b})=2\sigma^{2b+1}\quad (2b+1\le n),
\]
that is, by $\sigma^{j}$ for even $j$ and by $2\sigma^{j}$ for odd $j$, where
$1\le j\le n$. The subgroup generated by these elements is the right-hand
side of \eqref{eq:imcHP}.
\end{proof}
We conclude this section by expressing the stable class of the tangent bundle
of $\HP^{n}$ in the basis of Proposition~\ref{prop:KOHP}. By
\cite[Theorem~2.1]{Szczarba},
\begin{equation}\label{eq:THP}
T\HP^{n}\oplus\ad\gamma\oplus\epsilon^{1}\cong(n+1)\gamma_{\R},
\end{equation}
where $\epsilon^{1}$ denotes the trivial real line bundle.
\begin{corollary}\label{cor:THP}
The stable class of the tangent bundle of $\HP^{n}$ satisfies
\begin{equation}\label{eq:TKO}
\begin{gathered}
{[T\HP^{n}]}-4n=(n-1)\lambda-\nu\quad\text{in }\KO(\HP^{n}),\\
c\big([T\HP^{n}]-4n\big)=2(n-1)\sigma-\sigma^{2}\quad\text{in }\Ku(\HP^{n}).
\end{gathered}
\end{equation}
\end{corollary}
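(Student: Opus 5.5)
Pass from the isomorphism \eqref{eq:THP} to stable classes in $\KO(\HP^{n})$, then rewrite the result in the basis of Proposition~\ref{prop:KOHP}. Subtracting ranks from both sides of \eqref{eq:THP} is legitimate: the left side has rank $4n+3+1=4n+4$ and the right side has rank $4(n+1)$. This gives
\[
\big([T\HP^{n}]-4n\big)+\big(\ad\gamma-3\big)+\big(\epsilon^{1}-1\big)=(n+1)\big(\gamma_{\R}-4\big).
\]
Since $\epsilon^{1}-1=0$, $\gamma_{\R}-4=\lambda$, and $\ad\gamma-3=\nu+2\lambda$ by the definitions \eqref{eq:gens}, we obtain
\[
[T\HP^{n}]-4n=(n+1)\lambda-(\nu+2\lambda)=(n-1)\lambda-\nu ,
\]
which is the first identity of \eqref{eq:TKO}.

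For the second identity, apply the homomorphism $c$ and use Lemma~\ref{lem:cgens}, which gives $c(\lambda)=2\sigma$ and $c(\nu)=\sigma^{2}$. By additivity of $c$,
\[
c\big([T\HP^{n}]-4n\big)=(n-1)\cdot 2\sigma-\sigma^{2}=2(n-1)\sigma-\sigma^{2}.
\]

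The only point to watch is that \eqref{eq:THP} is an honest isomorphism of real bundles, so it yields an identity in $KO(\HP^{n})$ and then, after removing ranks, in $\KO(\HP^{n})$; no torsion or injectivity issues arise here. As a sanity check, the Pontryagin classes agree: $p(T\HP^{n})=(1+u)^{2n+2}(1+4u)^{-1}$ is consistent with \eqref{eq:pbund} and \eqref{eq:THP} via the Whitney product formula.

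For $n=1$ the formula reads $[TS^{4}]-4=-\nu$. Here $\nu=0$ in $\KO(\HP^{1})$, because $c(\nu)=\sigma^{2}=0$ and $c$ is injective, so the formula is consistent with $TS^{4}$ being stably trivial.
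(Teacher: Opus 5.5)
Your proposal is correct and is the paper's proof: pass \eqref{eq:THP} to $\KO(\HP^{n})$, substitute $\ad\gamma-3=\nu+2\lambda$ from \eqref{eq:gens} to get $(n-1)\lambda-\nu$, and then apply $c$ using Lemma~\ref{lem:cgens}. Your added consistency checks (the Pontryagin classes, and $\nu=0$ in $\KO(\HP^{1})$ by injectivity of $c$) are accurate but not needed for the argument.
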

\begin{proof}
Passing to $\KO(\HP^{n})$ in \eqref{eq:THP} yields
$[T\HP^{n}]-4n=(n+1)\lambda-(\ad\gamma-3)$, which equals $(n-1)\lambda-\nu$
by \eqref{eq:gens}. Applying $c$ and using Lemma~\ref{lem:cgens} yields the
second identity.
\end{proof}
\section{\texorpdfstring{$K$}{}-theory of \texorpdfstring{$M^{4n}_{\ell,m}$}{}}\label{sec:M}
In this section we compute $\Ku(M^{4n}_{\ell,m})$ and
$\KO(M^{4n}_{\ell,m})$ with explicit bases, together with the ring
structures of $K(M^{4n}_{\ell,m})$ and $KO(M^{4n}_{\ell,m})$
(Propositions~\ref{prop:KM} and~\ref{prop:KOM}), and determine the image of
the complexification map $c\colon\KO(M^{4n}_{\ell,m})\to\Ku(M^{4n}_{\ell,m})$ (Proposition~\ref{prop:imcM}), on which the proof of
Theorem~\ref{thm:main} is based. Throughout, $n\ge 2$ and $k=\ell+m$, as in
Section~\ref{sec:intro}. For $1\le r\le k$, put
\[
\sigma_{r}:=q_{r}^{*}\sigma\in\Ku(M^{4n}_{\ell,m}),\qquad
\lambda_{r}:=q_{r}^{*}\lambda,\quad
\nu_{r}:=q_{r}^{*}\nu\in\KO(M^{4n}_{\ell,m}),
\]
where $\sigma$, $\lambda$ and $\nu$ are the classes defined in
\eqref{eq:gens}, and $q_{r}\colon M^{4n}_{\ell,m}\to\HP^{n}$ is the collapse
map onto the $r$-th summand introduced in Section~\ref{sec:intro}.

Let $s_{4n}\in H^{4n}(\mathbb{S}^{4n};\Z)\cong\Z$ be the generator with
$\langle s_{4n},[\mathbb{S}^{4n}]\rangle=1$. Since the canonical collapse map $\pi\,\colon M^{4n}_{\ell,\,m}\rightarrow \mathbb{S}^{4n}$ has degree one, we have
$\pi^{*}s_{4n}=\omega_{4n,\ell,m}$. By \cite[Proposition~2.2]{atiyahtodd}
there is a unique generator $b_{4n}$ of $\Ku(\mathbb{S}^{4n})\cong\Z$ with
$\ch(b_{4n})=s_{4n}$. Put
\[
\tau:=\pi^{*}b_{4n}\in\Ku(M^{4n}_{\ell,m}).
\]
\subsection{Complex \texorpdfstring{$K$}{}-theory}
We first compute the Chern characters of the classes introduced above.
\begin{lemma}\label{lem:chM}
For all $r$ and all $j\ge 1$,
\[
\ch(\sigma_{r}^{\,j})=g(x_{r})^{j}
\qquad\text{and}\qquad
\ch(\tau)=\omega_{4n,\ell,m}
\]
in $H^{*}(M^{4n}_{\ell,m};\Q)$, where $g$ is the power series defined
in~\eqref{eq:g}.
\end{lemma}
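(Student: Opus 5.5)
The plan is to use only naturality and multiplicativity of the Chern character, transferring the computations already done on $\HP^{n}$ and $\mathbb{S}^{4n}$ along the maps $q_{r}$ and $\pi$.

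For the first identity, naturality of $\ch$ with respect to the collapse map $q_{r}\colon M^{4n}_{\ell,m}\to\HP^{n}$ gives
\[
\ch(\sigma_{r})=\ch(q_{r}^{*}\sigma)=q_{r}^{*}\ch(\sigma).
\]
By \eqref{eq:chsigma}, $\ch(\sigma)=g(u)$. Since $q_{r}^{*}$ is a ring homomorphism on rational cohomology, and $g(u)$ is a finite sum because $u^{n+1}=0$, we get $q_{r}^{*}g(u)=g(q_{r}^{*}u)=g(x_{r})$. Here we use $x_{r}=q_{r}^{*}(u)$, and $x_{r}^{n+1}=0$ by \eqref{eq:ringM}, so $g(x_{r})$ is also a finite sum. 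Multiplicativity of $\ch$ on $K(M^{4n}_{\ell,m})$ then gives
\[
\ch(\sigma_{r}^{\,j})=\ch(\sigma_{r})^{j}=g(x_{r})^{j}\qquad\text{for all }j\ge 1.
\]
Equivalently, $\sigma_{r}^{\,j}=q_{r}^{*}(\sigma^{j})$, and one can apply naturality directly to $\ch(\sigma^{j})=g(u)^{j}$.

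For the second identity, $\tau=\pi^{*}b_{4n}$, so naturality gives
\[
\ch(\tau)=\pi^{*}\ch(b_{4n})=\pi^{*}s_{4n}.
\]
Here $\ch(b_{4n})=s_{4n}$ by the choice of $b_{4n}$ via \cite[Proposition~2.2]{atiyahtodd}, where $s_{4n}$ is viewed in rational cohomology. Since $\pi$ has degree one, $\pi^{*}s_{4n}=\omega_{4n,\ell,m}$, as recorded just before the lemma. Hence $\ch(\tau)=\omega_{4n,\ell,m}$ in $H^{4n}(M^{4n}_{\ell,m};\Q)$.

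No step is a genuine obstacle. The only points needing care are bookkeeping:
\begin{itemize}
\item the identification of integral classes with their images in rational cohomology, which is compatible with $q_{r}^{*}$ and $\pi^{*}$;
\item the fact that pulling back a power series in $u$ is legitimate, because all powers beyond $n$ vanish.
\end{itemize}
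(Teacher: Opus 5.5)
Your proposal is correct and follows essentially the same argument as the paper: naturality of $\ch$ along $q_{r}$ with \eqref{eq:chsigma} and multiplicativity give $\ch(\sigma_{r}^{\,j})=g(x_{r})^{j}$. Naturality along $\pi$, together with $\ch(b_{4n})=s_{4n}$ and $\pi^{*}s_{4n}=\omega_{4n,\ell,m}$, gives $\ch(\tau)=\omega_{4n,\ell,m}$.
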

\begin{proof}
By the naturality and the multiplicativity of the Chern character,
\[
\ch(\sigma_{r}^{\,j})=q_{r}^{*}\ch(\sigma)^{j}=q_{r}^{*}g(u)^{j}=g(x_{r})^{j},
\]
by \eqref{eq:chsigma} and $q_{r}^{*}u=x_{r}$, and
$\ch(\tau)=\pi^{*}\ch(b_{4n})=\pi^{*}s_{4n}=\omega_{4n,\ell,m}$.
\end{proof}

\begin{proposition}\label{prop:KM}
The group $\Ku(M^{4n}_{\ell,m})$ is free abelian of rank $k(n-1)+1$ with basis
\[
\big\{\sigma_{r}^{\,j}\ :\ 1\le r\le k,\ 1\le j\le n-1\big\}\ \cup\
\{\tau\}.
\]
Moreover, for $1\le r,s\le k$ with $r\neq s$,
\begin{equation}\label{eq:Krel}
\sigma_{r}\sigma_{s}=0,\qquad
\sigma_{r}^{\,n}=\varepsilon_{r}\tau,\qquad
\sigma_{r}^{\,n+1}=0,
\end{equation}
and consequently
\[
K(M^{4n}_{\ell,m})\cong\Z[\sigma_{1},\dots ,\sigma_{k}]\Big/
\Big(\,\sigma_{r}\sigma_{s},\ \varepsilon_{r}\sigma_{r}^{\,n}
-\varepsilon_{s}\sigma_{s}^{\,n},\ \sigma_{t}^{\,n+1}\ :\
1\le r\neq s\le k,\ 1\le t\le k\,\Big).
\]
\end{proposition}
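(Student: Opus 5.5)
I would mimic the proof of Proposition~\ref{prop:KHP}, now using the cofibre sequence \eqref{eq:cofM} and exploiting that $\ch$ is injective on a torsion-free $K$-group.

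\emph{Step 1: the short exact sequence.} The Atiyah--Hirzebruch spectral sequence collapses for $M^{4n}_{\ell,m}$, since its cohomology is free and concentrated in even degrees by \eqref{eq:ringM}. Hence $K^{-1}(M^{4n}_{\ell,m})=0$, and $K(M^{4n}_{\ell,m})$ is free of rank $k(n-1)+2$, so $\Ku(M^{4n}_{\ell,m})$ has rank $k(n-1)+1$. For the wedge, $\Ku^{-1}(\bigvee_k\HP^{n-1})=\bigoplus_k\Ku^{-1}(\HP^{n-1})=0$. We also have $\Ku^{1}(\mathbb{S}^{4n})=0$. The long exact sequence of \eqref{eq:cofM} therefore gives
\[
0\to\Ku(\mathbb{S}^{4n})\xrightarrow{\pi^{*}}\Ku(M^{4n}_{\ell,m})\xrightarrow{\iota^{*}}\bigoplus_{k}\Ku(\HP^{n-1})\to 0 .
\]

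\emph{Step 2: identifying the images.} Since $q_r\circ\iota$ is the inclusion on the $r$-th wedge summand and constant on the others, $\iota^{*}\sigma_r^{\,j}$ is $\sigma^{j}$ in the $r$-th summand and zero elsewhere. By Proposition~\ref{prop:KHP} applied to $\HP^{n-1}$, the elements $\sigma_r^{\,j}$ with $1\le j\le n-1$ therefore map to a basis of the right-hand term. The class $\tau=\pi^{*}b_{4n}$ generates $\ker\iota^{*}$. Since the quotient is free, the sequence splits, which yields the stated basis.

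\emph{Step 3: the relations \eqref{eq:Krel}.} By the collapse of the spectral sequence and torsion-freeness, $\ch$ is injective on $\Ku(M^{4n}_{\ell,m})$ \cite[p.~19]{AH}, so each relation can be checked on Chern characters using Lemma~\ref{lem:chM}.
\begin{itemize}
\item For $r\neq s$, $\ch(\sigma_r\sigma_s)=g(x_r)g(x_s)$ is a sum of multiples of $x_r^{a}x_s^{b}$ with $a,b\ge1$, and these vanish by \eqref{eq:ringM}.
\item Since $g(t)=t+O(t^{2})$ and $x_r^{\,n+1}=0$, we get $\ch(\sigma_r^{\,n})=g(x_r)^{n}=x_r^{\,n}=\varepsilon_r\omega_{4n,\ell,m}=\ch(\varepsilon_r\tau)$.
\item Likewise $\ch(\sigma_r^{\,n+1})=g(x_r)^{n+1}=0$.
\end{itemize}

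\emph{Step 4: the ring structure.} The relations just proved define a surjection from the displayed quotient ring onto $K(M^{4n}_{\ell,m})$; surjectivity holds because the basis consists of monomials, with $\tau=\varepsilon_1\sigma_1^{\,n}$. In the quotient ring, every monomial reduces to $1$, some $\sigma_r^{\,j}$ with $1\le j\le n-1$, or $\sigma_1^{\,n}$. So the quotient is generated additively by $k(n-1)+2$ elements, which map to a basis of the target. The surjection is therefore an isomorphism.

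The main point needing care is Step~1: we need the vanishing of $\Ku^{-1}$ for the wedge and for $M^{4n}_{\ell,m}$, and the identification of $\iota^{*}\sigma_r$ with the summand inclusion. All the rest is formal.
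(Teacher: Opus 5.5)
Your proof is correct and follows the paper's argument: the same short exact sequence from \eqref{eq:cofM}, the same identification of $\iota^{*}\sigma_r^{\,j}$ with the summand inclusion and of $\tau$ as a generator of $\ker\iota^{*}$, and the same check of \eqref{eq:Krel} on Chern characters using injectivity of $\ch$. Your Step~4 spells out the additive-generator argument behind the ring presentation, which the paper only states as ``consequently''. The Atiyah--Hirzebruch computation for $M^{4n}_{\ell,m}$ itself in Step~1 is harmless but unnecessary: the exact sequence only needs $\Ku^{-1}$ of the wedge and $\Ku^{1}(\mathbb{S}^{4n})$ to vanish, and $\Ku^{-1}(M^{4n}_{\ell,m})$ plays no role.
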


\begin{proof}
Since $\Ku^{-1}(\bigvee\limits_{k}\HP^{n-1})=\bigoplus\limits_{k}\Ku^{-1}(\HP^{n-1})$
vanishes, as observed in Section~\ref{sec:HP}, and
$\Ku^{1}(\mathbb{S}^{4n})=0$ by Bott periodicity, applying $\Ku(\,\cdot\,)$ to the
cofibre sequence \eqref{eq:cofM} yields the short exact sequence
\begin{equation}\label{eq:seqMu}
0\to\Ku(\mathbb{S}^{4n})\xrightarrow{\ \pi^{*}\ }\Ku(M^{4n}_{\ell,m})
\xrightarrow{\ \iota^{*}\ }\bigoplus\limits_{k}\Ku(\HP^{n-1})\to 0 .
\end{equation}
By Proposition~\ref{prop:KHP}, the group $\bigoplus\limits_{k}\Ku(\HP^{n-1})$ is
free of rank $k(n-1)$; hence the short exact sequence~\eqref{eq:seqMu} splits and
$\Ku(M^{4n}_{\ell,m})$ is therefore free abelian of rank $k(n-1)+1$.

Since $q_{r}\circ\iota$ carries the $r$-th wedge summand by the inclusion
$\HP^{n-1}\subset\HP^{n}$ and collapses the remaining ones to a point, the
class $\iota^{*}\sigma_{r}^{\,j}\in\bigoplus\limits_{k}\Ku(\HP^{n-1})$ has $r$-th
component $\sigma^{j}$ and all other components zero. Hence, by
Proposition~\ref{prop:KHP},
\[
\big\{\iota^{*}\sigma_{r}^{\,j}\ :\ 1\le r\le k,\ 1\le j\le n-1\big\}
\]
is a basis of $\bigoplus\limits_{k}\Ku(\HP^{n-1})$. By exactness of \eqref{eq:seqMu},
$\ker\iota^{*}=\im\pi^{*}$ is generated by $\tau$. Since
$\bigoplus\limits_{k}\Ku(\HP^{n-1})$ is free abelian, therefore the exact sequence \eqref{eq:seqMu} splits,
and thus
\[
\big\{\sigma_{r}^{\,j}\ :\ 1\le r\le k,\ 1\le j\le n-1\big\}\ \cup\
\{\tau\}
\]
is a basis of $\Ku(M^{4n}_{\ell,m})$.

Since $\Ku(M^{4n}_{\ell,m})$ is free, the Chern character is injective on it
\cite[p.~19]{AH}; it therefore suffices to verify \eqref{eq:Krel} after
applying $\ch$. By
Lemma~\ref{lem:chM}, the multiplicativity of $\ch$ and the relations from
\eqref{eq:ringM}, we obtain
\[
\begin{gathered}
\ch(\sigma_{r}\sigma_{s})=g(x_{r})g(x_{s})=0,\qquad
\ch(\sigma_{r}^{\,n+1})=g(x_{r})^{n+1}=0,\\
\ch(\sigma_{r}^{\,n})=g(x_{r})^{n}=x_{r}^{n}
=\varepsilon_{r}\omega_{4n,\ell,m}=\varepsilon_{r}\ch(\tau),
\end{gathered}
\]
which yields \eqref{eq:Krel}.

Consequently, the relations \eqref{eq:Krel} give the asserted presentation of
the ring $K(M^{4n}_{\ell,m})$.
\end{proof}

\begin{remark}
By the injectivity of $\ch$ and Lemma~\ref{lem:chM}, the class $\tau$ is
characterised in $\Ku(M^{4n}_{\ell,m})$ by
$\ch(\tau)=\omega_{4n,\ell,m}$.
\end{remark}
\subsection{Real \texorpdfstring{$K$}{}-theory}
The computation of $\KO(M^{4n}_{\ell,m})$ proceeds along the same lines as
in complex $K$-theory, the generator of $\ker\iota^{*}$ in the exact sequence \eqref{eq:seqM}
now being provided by the classes introduced below.

For $1\le r\le k$, we define
$\theta_{r}\in\KO(M^{4n}_{\ell,m})$ by
\[
\theta_{r}:=
\begin{cases}
\nu_{r}^{\,n/2}, & n\ \text{even},\\
\lambda_{r}\nu_{r}^{\,(n-1)/2}, & n\ \text{odd}.
\end{cases}
\]
\begin{lemma}\label{lem:KOMfree}
The group $\KO(M^{4n}_{\ell,m})$ is free abelian of rank $k(n-1)+1$.
\end{lemma}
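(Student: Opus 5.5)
We apply $\KO(\,\cdot\,)$ to the cofibre sequence \eqref{eq:cofM}, exactly as in the proof of Proposition~\ref{prop:KM}. The relevant portion of the long exact sequence is
\[
\KO^{-1}\Big(\bigvee_{k}\HP^{n-1}\Big)\xrightarrow{\ \delta\ }\KO(\mathbb{S}^{4n})\xrightarrow{\ \pi^{*}\ }\KO(M^{4n}_{\ell,m})\xrightarrow{\ \iota^{*}\ }\KO\Big(\bigvee_{k}\HP^{n-1}\Big)\longrightarrow\KO^{1}(\mathbb{S}^{4n}).
\]
Since $4n-1\equiv 3$ or $7\pmod 8$, \eqref{eq:KOpt} gives $\KO^{1}(\mathbb{S}^{4n})\cong KO^{-(4n-1)}(\mathrm{pt})=0$, so $\iota^{*}$ is surjective.

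For the left-hand end we use the wedge decomposition
\[
\KO^{-1}\Big(\bigvee_{k}\HP^{n-1}\Big)=\bigoplus_{k}\KO^{-1}(\HP^{n-1}),
\]
which is a $2$-torsion group by \cite{nao}, as already used in the proof of Proposition~\ref{prop:KOHP}. Its image under $\delta$ in $\KO(\mathbb{S}^{4n})\cong\Z$ is therefore zero. Hence we obtain the short exact sequence
\begin{equation}\label{eq:seqM}
0\to\KO(\mathbb{S}^{4n})\xrightarrow{\ \pi^{*}\ }\KO(M^{4n}_{\ell,m})\xrightarrow{\ \iota^{*}\ }\bigoplus_{k}\KO(\HP^{n-1})\to 0 .
\end{equation}

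By Proposition~\ref{prop:KOHP} applied to $\HP^{n-1}$, the right-hand term is free abelian of rank $k(n-1)$, so \eqref{eq:seqM} splits. Since $\KO(\mathbb{S}^{4n})\cong\Z$ by \eqref{eq:KOpt} (as $4n\equiv 0,4\pmod 8$), it follows that $\KO(M^{4n}_{\ell,m})\cong\Z\oplus\Z^{k(n-1)}$ is free abelian of rank $k(n-1)+1$.

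The only step that needs care is the vanishing of $\delta$, which rests on the torsion statement for $\KO^{-1}(\HP^{n-1})$ from \cite{nao}. If we wished to avoid citing it, an alternative is available. The composite $c\circ\pi^{*}$ on $\KO(\mathbb{S}^{4n})$ equals $\pi^{*}\circ c$, and this is injective: $c$ is multiplication by $a_{n}$ by \eqref{eq:bott}, and $\pi^{*}$ is injective on $\Ku(\mathbb{S}^{4n})$ by \eqref{eq:seqMu}. Hence $\pi^{*}$ is injective on $\KO(\mathbb{S}^{4n})$, which forces $\delta=0$ directly. We will include this alternative to make the argument self-contained.
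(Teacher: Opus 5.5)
Your proof is correct and follows the paper's argument: the same long exact sequence for the cofibration \eqref{eq:cofM}, with $\KO^{1}(\mathbb{S}^{4n})=0$ by Bott periodicity, the connecting map killed by the $2$-torsion result of \cite{nao}, and the resulting short exact sequence split because $\bigoplus_{k}\KO(\HP^{n-1})$ is free. Your optional alternative for $\delta=0$, namely injectivity of $\pi^{*}\circ c=c\circ\pi^{*}$ on $\KO(\mathbb{S}^{4n})$ via \eqref{eq:bott} and \eqref{eq:seqMu}, is also valid and removes the citation from this step; however, the freeness of $\KO(\HP^{n-1})$ that you take from Proposition~\ref{prop:KOHP} is itself proved there using \cite{nao}, and running the same trick inside that proof (with \eqref{fvguef} in place of \eqref{eq:seqMu}) would remove that dependence too.
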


\begin{proof}
Since $\KO^{1}(\mathbb{S}^{4n})=0$ by \eqref{eq:KOpt}, and
$\KO^{-1}(\bigvee\limits_{k}\HP^{n-1})$ is a $2$-torsion group \cite{nao} while
$\KO(\mathbb{S}^{4n})\cong\Z$ is torsion free, applying $\KO(\,\cdot\,)$ to the cofibration~\eqref{eq:cofM} yields the short exact sequence
\begin{equation}\label{eq:seqM}
0\to\KO(\mathbb{S}^{4n})\xrightarrow{\ \pi^{*}\ }\KO(M^{4n}_{\ell,m})
\xrightarrow{\ \iota^{*}\ }\bigoplus_{k}\KO(\HP^{n-1})\to 0 .
\end{equation}
Moreover, $\bigoplus\limits_{k}\KO(\HP^{n-1})$ is free of rank $k(n-1)$, as
recorded in Section~\ref{sec:HP}; hence the short exact sequence~\eqref{eq:seqM} splits and
$\KO(M^{4n}_{\ell,m})$ is free abelian of rank $k(n-1)+1$.
\end{proof}

\begin{corollary}\label{cor:theta}
The class $\varepsilon_{r}\theta_{r}$ does not depend on $r$.
\end{corollary}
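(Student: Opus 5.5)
The plan is to compare the classes $\varepsilon_{r}\theta_{r}$ after complexification and then invoke injectivity of $c$.

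\textbf{Injectivity of $c$.} By Lemma~\ref{lem:KOMfree}, $\KO(M^{4n}_{\ell,m})$ is free abelian. If $c(\eta)=0$, then \eqref{eq:rc} gives $2\eta=r(c(\eta))=0$, and torsion-freeness forces $\eta=0$. Hence $c$ is injective on $\KO(M^{4n}_{\ell,m})$, exactly as in the proof of Proposition~\ref{prop:KOHP}. It therefore suffices to show that $c(\varepsilon_{r}\theta_{r})$ is independent of $r$.

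\textbf{Computing $c(\theta_{r})$.} By naturality of $c$ and Lemma~\ref{lem:cgens},
\[
c(\lambda_{r})=q_{r}^{*}c(\lambda)=2\sigma_{r},\qquad c(\nu_{r})=q_{r}^{*}c(\nu)=\sigma_{r}^{2}.
\]
Since $c$ is a ring homomorphism, we get $c(\theta_{r})=\sigma_{r}^{\,n}$ when $n$ is even and $c(\theta_{r})=2\sigma_{r}\cdot\sigma_{r}^{\,n-1}=2\sigma_{r}^{\,n}$ when $n$ is odd. Write this uniformly as $c(\theta_{r})=a_{n}\sigma_{r}^{\,n}$, with $a_{n}$ as in \eqref{eq:bott}.

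\textbf{Conclusion.} Proposition~\ref{prop:KM} gives $\sigma_{r}^{\,n}=\varepsilon_{r}\tau$. Since $\varepsilon_{r}^{2}=1$,
\[
c(\varepsilon_{r}\theta_{r})=a_{n}\varepsilon_{r}\sigma_{r}^{\,n}=a_{n}\varepsilon_{r}^{2}\tau=a_{n}\tau,
\]
and the right-hand side does not depend on $r$. Injectivity of $c$ then yields the claim.

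There is no serious obstacle. The only points needing care are that $c$ is multiplicative on $\KO$ of a non-unital reduced group, which is fine because it is the restriction of the ring map $KO\to K$, and that injectivity of $c$ uses freeness of $\KO(M^{4n}_{\ell,m})$, which is already established in Lemma~\ref{lem:KOMfree}.
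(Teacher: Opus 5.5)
Your proposal is correct and takes the same route as the paper. You compute $c(\theta_r)=a_n\sigma_r^{\,n}$ via naturality, multiplicativity and Lemma~\ref{lem:cgens}, use $\sigma_r^{\,n}=\varepsilon_r\tau$ to get $c(\varepsilon_r\theta_r)=a_n\tau$, and conclude by injectivity of $c$ on the free group $\KO(M^{4n}_{\ell,m})$; you additionally spell out the injectivity via $r\circ c=2$, which the paper leaves implicit at this point.
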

\begin{proof}
By Lemma~\ref{lem:cgens} and the naturality of the complexification map $c$,
\begin{equation}\label{fjvbgfejvb}
c(\lambda_{r})=2\sigma_{r},\qquad c(\nu_{r})=\sigma_{r}^{2},
\end{equation}
whence $c(\theta_{r})=a_{n}\sigma_{r}^{\,n}$ for $1\le r\le k$, where
$a_{n}\in\{1,2\}$ is the integer defined in \eqref{eq:bott}. Since
$\sigma_{r}^{\,n}=\varepsilon_{r}\tau$ by \eqref{eq:Krel}, we obtain
\begin{equation}\label{eq:ctheta}
c(\varepsilon_{r}\theta_{r})=\varepsilon_{r}a_{n}\sigma_{r}^{\,n}=a_{n}\tau .
\end{equation}
Finally, $\KO(M^{4n}_{\ell,m})$ is free by Lemma~\ref{lem:KOMfree}, so $c$
is injective on $\KO(M^{4n}_{\ell,m})$, and the assertion follows from
\eqref{eq:ctheta}.
\end{proof}
In view of Corollary~\ref{cor:theta}, we define
$\theta:=\varepsilon_{r}\theta_{r}\in\KO(M^{4n}_{\ell,m})$.

We can now compute an explicit basis of $\KO(M^{4n}_{\ell,m})$.
\begin{proposition}\label{prop:KOM}
The collection
\[
\big\{\nu_{r}^{\,b}\ :\ 1\le r\le k,\ 1\le 2b\le n-1\big\}\ \cup\
\big\{\lambda_{r}\nu_{r}^{\,b}\ :\ 1\le r\le k,\ 1\le 2b+1\le n-1\big\}\
\cup\ \{\theta\}
\]
forms a basis of $\KO(M^{4n}_{\ell,m})$. Moreover, for $1\le r,s\le k$ with
$r\neq s$,
\begin{equation}\label{eq:KOrel}
\lambda_{r}\lambda_{s}=\lambda_{r}\nu_{s}=\nu_{r}\nu_{s}=0,\qquad
\lambda_{r}^{2}=4\nu_{r},\qquad
\nu_{r}^{\,\lfloor n/2\rfloor+1}=0,\qquad
\lambda_{r}\nu_{r}^{\,\lceil n/2\rceil}=0.
\end{equation}
Consequently the ring $KO(M^{4n}_{\ell,m})$ is isomorphic to
\[
\frac{\Z[\lambda_{1},\nu_{1},\dots ,\lambda_{k},\nu_{k}]}
{\Big(\lambda_{r}\lambda_{s},\ \lambda_{r}\nu_{s},\ \nu_{r}\nu_{s},\
\varepsilon_{r}\theta_{r}-\varepsilon_{s}\theta_{s},\
\lambda_{t}^{2}-4\nu_{t},\ \nu_{t}^{\,\lfloor n/2\rfloor+1},\
\lambda_{t}\nu_{t}^{\,\lceil n/2\rceil}\ :\ 1\le r\neq s\le k,\
1\le t\le k\Big)}.
\]
\end{proposition}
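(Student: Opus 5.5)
Mirroring Proposition~\ref{prop:KM}, I will work with the split short exact sequence \eqref{eq:seqM} and the injectivity of $c$ on the free group $\KO(M^{4n}_{\ell,m})$ (Lemma~\ref{lem:KOMfree} together with \eqref{eq:rc}). The first step is to show that the listed elements other than $\theta$ map under $\iota^{*}$ to a basis of $\bigoplus_{k}\KO(\HP^{n-1})$. The map $q_{r}\circ\iota$ restricts to the inclusion $\HP^{n-1}\subset\HP^{n}$ on the $r$-th wedge summand and collapses the others. Hence $\iota^{*}\nu_{r}^{\,b}$ has $r$-th component $\nu^{b}$ and all other components zero, and likewise $\iota^{*}\lambda_{r}\nu_{r}^{\,b}$ has $r$-th component $\lambda\nu^{b}$. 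By Proposition~\ref{prop:KOHP} applied to $\HP^{n-1}$, these images form a basis, indexed by $1\le 2b\le n-1$ and $1\le 2b+1\le n-1$ respectively.

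The second step is to identify a generator of $\ker\iota^{*}=\im\pi^{*}$. Let $z$ generate $\KO(\mathbb{S}^{4n})$. By \eqref{eq:bott} we have $c(z)=\pm a_{n}b_{4n}$, so naturality gives $c(\pi^{*}z)=\pm a_{n}\tau$. On the other hand, \eqref{eq:ctheta} gives $c(\theta)=a_{n}\tau$. Injectivity of $c$ then yields $\theta=\pm\pi^{*}z$, so $\theta$ generates $\ker\iota^{*}$. Since the sequence splits, a basis of $\ker\iota^{*}$ together with lifts of a basis of the quotient is a basis of the whole group, which gives the asserted basis.

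For the relations \eqref{eq:KOrel}, injectivity of $c$ reduces everything to identities in $\Ku(M^{4n}_{\ell,m})$. By \eqref{fjvbgfejvb}, $c$ sends $\lambda_{r}\mapsto 2\sigma_{r}$ and $\nu_{r}\mapsto\sigma_{r}^{2}$, and $c$ is multiplicative. The mixed products then vanish because $\sigma_{r}\sigma_{s}=0$ by \eqref{eq:Krel}, and $c(\lambda_{r}^{2})=4\sigma_{r}^{2}=c(4\nu_{r})$. The power relations become $c(\nu_{r}^{\lfloor n/2\rfloor+1})=\sigma_{r}^{2\lfloor n/2\rfloor+2}$ and $c(\lambda_{r}\nu_{r}^{\lceil n/2\rceil})=2\sigma_{r}^{2\lceil n/2\rceil+1}$, and both exponents are at least $n+1$, so these vanish by \eqref{eq:Krel}. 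The relation $\varepsilon_{r}\theta_{r}=\varepsilon_{s}\theta_{s}$ is Corollary~\ref{cor:theta}.

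For the ring presentation, I will consider the surjection from the polynomial ring onto $KO(M^{4n}_{\ell,m})$, which factors through the stated quotient $R$. Surjectivity holds because the basis elements are monomials. It remains to show that $R$ is additively spanned by $1$ together with the images of the basis. Using the relations, every monomial reduces as follows. Mixed monomials vanish. $\lambda_{r}^{2}$ can be replaced by $4\nu_{r}$, so each monomial becomes $\nu_{r}^{b}$ or $\lambda_{r}\nu_{r}^{b}$. Those of total "$\sigma$-degree" $n$ equal $\theta_{r}=\varepsilon_{r}\varepsilon_{1}\theta_{1}$, and those of degree greater than $n$ vanish. Hence $R$ is generated by at most $1+k(n-1)+1$ elements. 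The surjection $R\to KO$ maps these generators to a basis of the free group $KO(M^{4n}_{\ell,m})=\Z\oplus\KO(M^{4n}_{\ell,m})$, so it is an isomorphism.

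The main obstacle is this final spanning bookkeeping: making sure the reduction via $\lambda^{2}=4\nu$ and the degree-$n$ identification covers every monomial. None of the other steps is delicate.
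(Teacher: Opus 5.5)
Your proposal is correct and follows the paper's proof essentially step by step: you obtain the basis of $\bigoplus_{k}\KO(\HP^{n-1})$ via $q_{r}\circ\iota$, you get $\theta=\pm\pi^{*}z$ from the injectivity of $c$, and you derive the relations by pushing them through $c$ into \eqref{eq:Krel}. The differences are minor: you get the power relations from $\sigma_{r}^{\,n+1}=0$ rather than by pulling back the relations of Proposition~\ref{prop:KOHP} along $q_{r}^{*}$, and you spell out the spanning argument for the ring presentation, which the paper compresses into a single ``consequently''.
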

\begin{proof}
Consider the classes $\iota^{*}\nu_{r}^{\,b}$ and
$\iota^{*}\lambda_{r}\nu_{r}^{\,b}$ in $\bigoplus\limits_{k}\KO(\HP^{n-1})$. By an
analogous observation as in the proof of Proposition~\ref{prop:KM}, we get
that their $r$-th components are $\nu^{b}$ and $\lambda\nu^{b}$ respectively,
all other components being zero. Hence, by Proposition~\ref{prop:KOHP},
\[
\big\{\iota^{*}\nu_{r}^{\,b}\ :\ 1\le r\le k,\ 1\le 2b\le n-1\big\}\ \cup\
\big\{\iota^{*}\lambda_{r}\nu_{r}^{\,b}\ :\ 1\le r\le k,\
1\le 2b+1\le n-1\big\}
\]
is a basis of $\bigoplus\limits_{k}\KO(\HP^{n-1})$. Let $z$ be a generator of
$\KO(\mathbb{S}^{4n})\cong\Z$. By the naturality of $c$ and \eqref{eq:bott}, we obtain
\[
c(\pi^{*}z)=\pi^{*}c(z)=\pm a_{n}\pi^{*}b_{4n}=\pm a_{n}\tau=\pm c(\theta),
\]
the last equality by \eqref{eq:ctheta}. Since $c$ is injective on $\KO(M^{4n}_{\ell,\,m})$, we have
$\theta=\pm\pi^{*}z$, so that $\theta$ generates $\im\pi^{*}=\ker\iota^{*}$
by exactness of \eqref{eq:seqM}. Since $\bigoplus\limits_{k}\KO(\HP^{n-1})$ is free abelian, the short exact sequence \eqref{eq:seqM}
splits, and therefore
\[
\big\{\nu_{r}^{\,b}\ :\ 1\le r\le k,\ 1\le 2b\le n-1\big\}\ \cup\
\big\{\lambda_{r}\nu_{r}^{\,b}\ :\ 1\le r\le k,\ 1\le 2b+1\le n-1\big\}\
\cup\ \{\theta\}
\]
is a basis of $\KO(M^{4n}_{\ell,m})$.

Applying $q_{r}^{*}$ to the relations obtained from
Proposition~\ref{prop:KOHP} yields $\nu_{r}^{\,\lfloor n/2\rfloor+1}=0$ and
$\lambda_{r}\nu_{r}^{\,\lceil n/2\rceil}=0$. Since
$c(\lambda_{r})=2\sigma_{r}$ and $c(\nu_{r})=\sigma_{r}^{2}$, while
$\sigma_{r}\sigma_{s}=0$ for $r\neq s$ by \eqref{eq:Krel}, the classes
$\lambda_{r}\lambda_{s}$, $\lambda_{r}\nu_{s}$, $\nu_{r}\nu_{s}$ and $\lambda_{r}^{2}-4\nu_{r}$ are annihilated by $c$, and thus the remaining
relations in \eqref{eq:KOrel} follow from the injectivity of $c$.

Consequently, the relations \eqref{eq:KOrel} together with the identity
$\varepsilon_{r}\theta_{r}=\theta$ of Corollary~\ref{cor:theta} yield the
asserted presentation of the ring $KO(M^{4n}_{\ell,m})$.
\end{proof}
We can now describe the image of the complexification map on
$M^{4n}_{\ell,m}$, in terms of the basis of $\Ku(M^{4n}_{\ell,m})$ furnished
by Proposition~\ref{prop:KM}.

\begin{proposition}\label{prop:imcM}
The image of the complexification map
$c\colon\KO(M^{4n}_{\ell,m})\to\Ku(M^{4n}_{\ell,m})$ is
\[
\im c=\Big\{\ \sum_{r=1}^{k}\sum_{j=1}^{n-1}A_{r,j}\sigma_{r}^{\,j}+B\tau\ :\
A_{r,j}\ \text{even for odd } j,\ \ B\ \text{even if } n\ \text{odd}\Big\}.
\]
\end{proposition}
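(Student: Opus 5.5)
Since $\KO(M^{4n}_{\ell,m})$ is free with the explicit basis of Proposition~\ref{prop:KOM}, and $c$ is a homomorphism, $\im c$ is the subgroup of $\Ku(M^{4n}_{\ell,m})$ generated by the images of these basis elements. I will compute each of them in the basis $\{\sigma_r^{\,j}\}\cup\{\tau\}$ of Proposition~\ref{prop:KM} and then read off the subgroup they span.

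Multiplicativity of $c$ together with \eqref{fjvbgfejvb} gives
\[
c(\nu_r^{\,b})=\sigma_r^{\,2b}\quad(1\le 2b\le n-1),\qquad
c(\lambda_r\nu_r^{\,b})=2\sigma_r^{\,2b+1}\quad(1\le 2b+1\le n-1).
\]
For the top class, \eqref{eq:ctheta} gives $c(\theta)=a_n\tau$, which is $2\tau$ for $n$ odd and $\tau$ for $n$ even. All exponents $2b$ and $2b+1$ lie in $\{1,\dots,n-1\}$, so these images are already expressed in the basis of Proposition~\ref{prop:KM}. No reduction via $\sigma_r^{\,n}=\varepsilon_r\tau$ is needed.

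The generators of $\im c$ are therefore $\sigma_r^{\,j}$ for even $j\le n-1$, $2\sigma_r^{\,j}$ for odd $j\le n-1$, and $a_n\tau$. Each is a multiple of a distinct basis vector of the free group $\Ku(M^{4n}_{\ell,m})$, so the subgroup they generate consists exactly of the combinations $\sum_{r,j}A_{r,j}\sigma_r^{\,j}+B\tau$ with $A_{r,j}$ even for odd $j$ and $B\in a_n\Z$. That is, $B$ is even when $n$ is odd and arbitrary when $n$ is even, which is the claimed description.

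There is no real obstacle. The only point needing care is the $\tau$-coefficient, and it is handled entirely by Corollary~\ref{cor:theta} and the value of $a_n$ in \eqref{eq:bott}. Because the basis is diagonal with respect to $c$, no unimodularity or change-of-basis argument is required.
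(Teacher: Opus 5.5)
Your proof is correct, but it takes a different route from the paper's. You read off $\im c$ directly from the basis of $\KO(M^{4n}_{\ell,m})$ in Proposition~\ref{prop:KOM}. Since $c(\nu_r^{\,b})=\sigma_r^{\,2b}$, $c(\lambda_r\nu_r^{\,b})=2\sigma_r^{\,2b+1}$ and $c(\theta)=a_n\tau$, the map $c$ is ``diagonal'' with respect to that basis and the basis of Proposition~\ref{prop:KM}, so the image is the evident sublattice and both inclusions follow at once.

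The paper's proof does not use Proposition~\ref{prop:KOM}. It writes an arbitrary $\xi$ as $\sum_r q_r^{*}(\eta_r)+\pi^{*}(z)$, using surjectivity of $\KO(\HP^{n})\to\KO(\HP^{n-1})$ and exactness of \eqref{eq:seqM}. It then applies $c$ and uses Corollary~\ref{cor:imcHP} on each summand, together with $\sigma_r^{\,n}=\varepsilon_r\tau$ and $c(z_0)=a_nb_{4n}$. The converse inclusion is handled by a separate explicit construction.

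Your route is shorter, but it relies on the full basis computation of Proposition~\ref{prop:KOM}, and hence on Corollary~\ref{cor:theta} and the injectivity of $c$. The paper's route needs only the short exact sequence and the result for $\HP^{n}$. It also yields formula \eqref{eq:coeffs}, which shows how the top coefficients $A^{(r)}_{n}$ of the summands feed into the $\tau$-coordinate.
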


\begin{proof}
Let $\xi\in\KO(M^{4n}_{\ell,m})$. Since $q_{r}\circ\iota\,\colon \bigvee\limits_{k}\HP^{n-1}\rightarrow\HP^{n}$ carries the $r$-th
wedge summand by the inclusion $\HP^{n-1}\subset\HP^{n}$ and collapses the
remaining ones to a point, the composite
\[
\KO(\HP^{n})\xrightarrow{\ q_{r}^{*}\ }\KO(M^{4n}_{\ell,m})
\xrightarrow{\ \iota^{*}\ }\bigoplus_{k}\KO(\HP^{n-1})
\] sends an element $\eta$ of $\KO(\HP^{n})$ to the element of
$\bigoplus\limits_{k}\KO(\HP^{n-1})$ with $r$-th component $\eta|_{\HP^{n-1}}$ and
all other components zero. By the surjectivity of
$\iota^{*}\colon\KO(\HP^{n})\to\KO(\HP^{n-1})$ in \eqref{eq:seqHP}, there
exist $\eta_{1},\dots ,\eta_{k}\in\KO(\HP^{n})$ with
$\iota^{*}\big(\sum\limits_{r=1}^{k}q_{r}^{*}\eta_{r}\big)=\iota^{*}(\xi)$; by exactness
of \eqref{eq:seqM} then there exists $z\in\KO(\mathbb{S}^{4n})$ with
\begin{equation}\label{eq:decomp}
\xi=\sum_{r=1}^{k}q_{r}^{*}(\eta_{r})+\pi^{*}(z).
\end{equation}
Thus every element of $\KO(M^{4n}_{\ell,m})$ has the form \eqref{eq:decomp},
with $\eta_{1},\dots ,\eta_{k}$ and $z$ arbitrary.

Applying $c$ to \eqref{eq:decomp} and using its naturality, we obtain
\[
c(\xi)=\sum_{r=1}^{k}q_{r}^{*}c(\eta_{r})+\pi^{*}c(z).
\]
By Corollary~\ref{cor:imcHP},
$c(\eta_{r})=\sum\limits_{j=1}^{n}A^{(r)}_{j}\sigma^{j}$ with $A^{(r)}_{j}$ even for
odd $j$. Let $z_{0}$ be the generator of $\KO(\mathbb{S}^{4n})\cong\Z$ with
$c(z_{0})=a_{n}b_{4n}$, as in \eqref{eq:bott}, and write $z=\zeta z_{0}$ with
$\zeta\in\Z$; then $\pi^{*}c(z)=a_{n}\zeta\,\tau$. Since
$q_{r}^{*}\sigma^{j}=\sigma_{r}^{\,j}$ by definition and
$\sigma_{r}^{\,n}=\varepsilon_{r}\tau$ by \eqref{eq:Krel}, we obtain
\begin{equation}\label{eq:coeffs}
c(\xi)=\sum_{r=1}^{k}\sum_{j=1}^{n-1}A^{(r)}_{j}\sigma_{r}^{\,j}
+\Big(\sum_{r=1}^{k}\varepsilon_{r}A^{(r)}_{n}+a_{n}\zeta\Big)\tau .
\end{equation}
Thus the coordinates of $c(\xi)$ are
\[
A_{r,j}:=A^{(r)}_{j}\ \ (1\le j\le n-1),\qquad
B:=\sum_{r=1}^{k}\varepsilon_{r}A^{(r)}_{n}+a_{n}\zeta .
\]
Clearly $A_{r,j}$ is even for odd $j$ and otherwise arbitrary; and if $n$ is
odd, then $A^{(r)}_{n}$ is even for every $r$ and $a_{n}=2$, thus $B$ is
even.

Conversely, consider a tuple
\[
\big(\,(A_{r,j})_{1\le r\le k,\ 1\le j\le n-1},\ B\,\big)\in\Z^{k(n-1)+1}
\]
such that $A_{r,j}$ is even for odd $j$, and $B$ is even if $n$ is odd.
Since $a_{n}=1$ for even $n$ and $a_{n}=2$ for odd $n$, there is
$\zeta\in\Z$ with $B=a_{n}\zeta$. By Corollary~\ref{cor:imcHP} there are
$\eta_{1},\dots ,\eta_{k}\in\KO(\HP^{n})$ with
$c(\eta_{r})=\sum\limits_{j=1}^{n-1}A_{r,j}\sigma^{j}$. Putting
$\xi:=\sum\limits_{r=1}^{k}q_{r}^{*}(\eta_{r})+\pi^{*}(\zeta z_{0})$, we
obtain from \eqref{eq:coeffs} that
\[
c(\xi)=\sum_{r=1}^{k}\sum_{j=1}^{n-1}A_{r,j}\sigma_{r}^{\,j}+B\tau .
\]
This proves the asserted description of $\im c$.
\end{proof}

\subsection{The tangent bundle}
We now determine the class of the tangent bundle of $M^{4n}_{\ell,m}$ and its
complexification, both needed in Section~\ref{sec:proof}. We use the
following description of the tangent bundle of a connected sum, due to
\cite[Lemma~2.1]{GK}.


\begin{theorem}\label{thm:GK}
Let $M_{1}$ and $M_{2}$ be closed oriented smooth $d$-manifolds, and let
$p_{i}\colon M_{1}\# M_{2}\to M_{i}$ be the collapse maps. Then
\[
T(M_{1}\# M_{2})\oplus\epsilon^{d}\cong
p_{1}^{*}(TM_{1})\oplus p_{2}^{*}(TM_{2}).
\]
\end{theorem}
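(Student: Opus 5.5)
The plan is to realise $M_{1}\# M_{2}$ as the union of the punctured manifolds $M_{1}^{\circ}=M_{1}\setminus\mathrm{int}\,D_{1}$ and $M_{2}^{\circ}=M_{2}\setminus\mathrm{int}\,D_{2}$, glued along their boundary spheres $S^{d-1}$ by an orientation-reversing diffeomorphism. Let $p_{i}$ be the maps that are the identity on $M_{i}^{\circ}$ and collapse the other piece $M_{j}^{\circ}$ (through a collar) onto the disc $D_{i}$. Since $D_{i}$ is contractible, $TM_{i}|_{D_{i}}$ is trivial. Therefore $p_{i}^{*}TM_{i}$ restricts to $TM_{i}^{\circ}=T(M_{1}\#M_{2})|_{M_{i}^{\circ}}$ on $M_{i}^{\circ}$, and is trivialised on $M_{j}^{\circ}$ by a framing $\phi_{i}$ of $TM_{i}|_{D_{i}}$ pulled back along the collapse.

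The main step is to build an explicit isomorphism $T(M_{1}\#M_{2})\oplus\epsilon^{d}\to p_{1}^{*}TM_{1}\oplus p_{2}^{*}TM_{2}$ piece by piece.
\begin{itemize}
\item On $M_{1}^{\circ}$, send $T\oplus\epsilon^{d}$ to $TM_{1}^{\circ}\oplus\epsilon^{d}$ by the identity on the first summand. On the second summand, use the trivialisation of $p_{2}^{*}TM_{2}$ given by $\phi_{2}$.
\item On $M_{2}^{\circ}$, do the symmetric thing: use the identity on $TM_{2}^{\circ}$, and on $\epsilon^{d}$ use the framing $\phi_{1}$ of $p_{1}^{*}TM_{1}$.
\item Along the gluing sphere $S=S^{d-1}$, the two descriptions differ by a clutching function $S\to GL_{2d}(\R)$.
\end{itemize}

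To compute this clutching function, compare both framings near $S$ inside the neck $S\times(-1,1)$. There, $T(M_{1}\#M_{2})$ is framed by the restriction of $\phi_{1}$ on one side and by $\phi_{2}$ composed with the gluing on the other side. The discrepancy is the map $S\to GL_{d}$ describing how the standard framing of $\R^{d}$ compares with the framing of $\R^{d}$ obtained after inversion. This map is the derivative of the reflection/inversion, so it is the classifying map of $TS^{d}$, up to sign.

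On the block $T\oplus\epsilon^{d}$, the clutching function then has the form
\[
\begin{pmatrix} g & 0\\ 0 & g^{-1}\end{pmatrix},\qquad g\colon S^{d-1}\to GL_{d}(\R),
\]
possibly after permuting the blocks. Such a matrix is homotopic to the identity in $GL_{2d}(\R)$, by the standard rotation homotopy connecting $\mathrm{diag}(g,g^{-1})$ to $\mathrm{diag}(gg^{-1},1)$. Hence the two bundles are isomorphic.

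The step I expect to be the main obstacle is the bookkeeping of orientations and blocks. One has to check that the discrepancy really appears as $g\oplus g^{-1}$ (or $g\oplus g$ conjugated by a fixed reflection), and not as $g\oplus 1$. In the latter case the two sides would differ by $TS^{d}$ pulled back along the collapse, which is stably trivial anyway.

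So even in the worst case the conclusion holds up to adding trivial bundles. By a dimension count ($2d>d$, the stable range), stable isomorphism gives an honest isomorphism of rank-$2d$ bundles over the $d$-dimensional complex $M_{1}\#M_{2}$.

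An alternative route, which I would keep as a fallback, is to embed $M_{1}\#M_{2}$ as the boundary of the boundary connected sum of $M_{1}\times I$ and $M_{2}\times I$, and compare normal bundles there.
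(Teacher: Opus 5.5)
The paper gives no proof of this statement; it quotes it from \cite{GK}. Your clutching argument along the neck sphere can be completed, but the step you present as the main one is wrong, and the proof actually rests on the case you call the worst case. Take your isomorphisms $F_{1}(v,w)=(v,\phi_{2}w)$ on $M_{1}^{\circ}$ and $F_{2}(v,w)=(\phi_{1}w,v)$ on $M_{2}^{\circ}$. The tangent summand is sent by the identity on both pieces, only into different summands of the target. Hence the framing discrepancy $g=\phi_{2}^{-1}\phi_{1}$ of $T(M_{1}\#M_{2})|_{S}$ enters exactly once. In the frame $(\phi_{1},\mathrm{standard})$ of $(T\oplus\epsilon^{d})|_{S}$ one finds
\[
F_{1}^{-1}F_{2}=\begin{pmatrix}0&I\\ g&0\end{pmatrix}=\begin{pmatrix}I&0\\ 0&g\end{pmatrix}P,
\]
where $P$ is the constant block transposition. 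No $g^{-1}$ appears, in this or any other choice of frame, so the rotation homotopy for $\mathrm{diag}(g,g^{-1})$ does not apply.

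Two further points are needed to finish.

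\textbf{Components.} $\det P=(-1)^{d}$, while $\det g>0$ for oriented framings. So for odd $d$ the clutching map is not even in the identity component of $GL_{2d}(\R)$, and "possibly after permuting the blocks" is not innocuous. Fix this by precomposing $F_{2}$ with $\mathrm{id}\oplus R$, where $R$ is a constant reflection of $\epsilon^{d}$. This is an automorphism over all of $M_{2}^{\circ}$, so it is harmless. Then move the remaining constant factor to $I$ inside $GL^{+}_{2d}(\R)$.

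\textbf{The essential input.} What remains is $\mathrm{diag}(I,g)$. As you correctly say, $g$ is the clutching map of $TS^{d}$, being the derivative of the inversion composed with a reflection. Since $TS^{d}\oplus\epsilon^{1}$ is trivial, $g\oplus 1\colon S^{d-1}\to GL^{+}_{d+1}(\R)$ is null-homotopic. Because $2d\ge d+1$, it follows that $\mathrm{diag}(I,g)$ is null-homotopic in $GL^{+}_{2d}(\R)$. Therefore $F_{2}$ can be modified in a collar of $S$ to agree with $F_{1}$, and the two glue to the required isomorphism.

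So the stable triviality of $TS^{d}$ is the key input, not a back-up. Using the null-homotopy directly also makes your detour through "stable isomorphism implies isomorphism" unnecessary. That detour would also need an argument that re-clutching along the separating sphere $S$, which bounds no disc in $M_{1}\#M_{2}$, changes the class by the pull-back of $TS^{d}$.

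Your fallback via the $1$-handle cobordism $W\simeq M_{1}\vee M_{2}$, with $TW|_{M_{1}\#M_{2}}\cong T(M_{1}\#M_{2})\oplus\epsilon^{1}$, avoids this bookkeeping. There a bundle over the wedge is determined by its restrictions to the summands, once a trivial summand absorbs the orientation ambiguity at the wedge point. The corrected clutching route instead shows exactly where the sphere enters.
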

\begin{corollary}\label{lem:TM}
In $\KO(M^{4n}_{\ell,m})$, we have
\begin{equation}\label{ehvfjfrv}
[TM^{4n}_{\ell,m}]-4n
=\sum_{r=1}^{k}\big((n-1)\,\lambda_{r}-\nu_{r}\big),
\end{equation}
and consequently
\begin{equation}\label{eq:cTM}
c\big([TM^{4n}_{\ell,m}]-4n\big)
=\sum_{r=1}^{k}\big(2(n-1)\,\sigma_{r}-\sigma_{r}^{2}\big).
\end{equation}
\end{corollary}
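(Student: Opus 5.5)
We prove \eqref{ehvfjfrv} by induction on $k$ using Theorem~\ref{thm:GK}, and then obtain \eqref{eq:cTM} by applying $c$.

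First, each summand of $M^{4n}_{\ell,m}$ is $\HP^{n}$ or $\overline{\HP^{n}}$. The tangent bundle is insensitive to orientation as an unoriented real bundle, so $T\overline{\HP^{n}}=T\HP^{n}$ in $\KO$. Iterating Theorem~\ref{thm:GK} along the connected sum, with the collapse maps composing to the maps $q_{r}$, gives a stable isomorphism
\[
TM^{4n}_{\ell,m}\oplus\epsilon^{4n(k-1)}\cong\bigoplus_{r=1}^{k}q_{r}^{*}(T\HP^{n}).
\]
For the induction we write $M=M'\#N$, where $N$ is the last summand. We must check that the composite of the collapse $M\to M'$ with the collapse $M'\to$ (the $r$-th summand) is homotopic to $q_{r}$; this holds because both maps collapse the complement of a disc-removed copy of the $r$-th summand. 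Orientation plays no role at the level of unoriented real bundles, although $q_{r}$ has degree $\varepsilon_{r}$.

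Passing to reduced classes, where ranks cancel, we get $[TM^{4n}_{\ell,m}]-4n=\sum_{r}q_{r}^{*}([T\HP^{n}]-4n)$. Corollary~\ref{cor:THP} gives $[T\HP^{n}]-4n=(n-1)\lambda-\nu$. Since $\lambda_{r}=q_{r}^{*}\lambda$ and $\nu_{r}=q_{r}^{*}\nu$, this yields \eqref{ehvfjfrv}.

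To obtain \eqref{eq:cTM}, we apply $c$ to \eqref{ehvfjfrv} and use \eqref{fjvbgfejvb}, namely $c(\lambda_{r})=2\sigma_{r}$ and $c(\nu_{r})=\sigma_{r}^{2}$, together with additivity of $c$.

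The only step requiring care is the identification of the iterated collapse maps with the $q_{r}$ up to homotopy, and the remark that reversing the orientation of a summand does not change its tangent bundle as a real bundle. The rest is formal.
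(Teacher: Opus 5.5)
Your proposal is correct and follows the same route as the paper. Both arguments iterate Theorem~\ref{thm:GK} over the $k$ summands to get $[TM^{4n}_{\ell,m}]-4n=\sum_{r}q_{r}^{*}([T\HP^{n}]-4n)$, substitute Corollary~\ref{cor:THP}, and apply $c$ via \eqref{fjvbgfejvb}; your checks that the iterated collapse maps agree with the $q_{r}$ up to homotopy and that orientation reversal leaves the underlying real tangent bundle unchanged just spell out what the paper's one-line proof leaves implicit.
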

\begin{proof}
Applying Theorem~\ref{thm:GK} on $M^{4n}_{\ell,m}$, we obtain
\begin{equation}\label{heog}
[TM^{4n}_{\ell,m}]-4n=\sum_{r=1}^{k}q_{r}^{*}\big([T\HP^{n}]-4n\big)
\end{equation}
in $\KO(M^{4n}_{\ell,m})$. Now~\eqref{ehvfjfrv} follows from
\eqref{eq:TKO}, and \eqref{eq:cTM} from~\eqref{fjvbgfejvb}.
\end{proof}
\section{From Pontryagin classes to \texorpdfstring{$K$}{}-theoretic coordinates}\label{sec:univ}

In this section we compute the coordinates $A_{r,j}$ and $B$ of
Proposition~\ref{prop:imcM} in terms of the Pontryagin coordinates
\eqref{eq:coords} of $\xi$ (Theorem~\ref{lem:translate}), which is the last
ingredient in the proof of Theorem~\ref{thm:main}.

Recall the Pontryagin character of a real bundle $\xi$,
\[
\ph(\xi):=\ch\big(c(\xi)\big).
\]
For a complex vector bundle $\varrho$ and $i>0$, the component of
$\ch(\varrho)$ in degree $2i$ is
\[
\ch_{i}(\varrho)=\frac{1}{i!}\,
s_{i}\big(c_{1}(\varrho),\dots ,c_{i}(\varrho)\big),
\]
with $s_{i}$ as in \eqref{eq:newton} \cite[Chapter.~10]{Hir66}, while $s_{2j}\big(c_{1}(\xi\otimes\C),\dots ,c_{2j}(\xi\otimes\C)\big)
=2\,s_{j}\big(p_{1}(\xi),\dots ,p_{j}(\xi)\big)$
(this follows from the discussion after Example~16.6 on page~193 of
\cite{MS}, together with Problem~16-C there). Hence, for $j>0$,
\begin{equation}\label{eq:ph}
\ph_{j}(\xi)=\ch_{2j}(\xi\otimes\mathbb{C})=\frac{2}{(2j)!}\,s_{j}\big(p_{1}(\xi),\dots ,p_{j}(\xi)\big)
\ \in\ H^{4j}(\,\cdot\,;\Q).
\end{equation}
We first express the Pontryagin character of a bundle over
$M^{4n}_{\ell,m}$ in terms of its Pontryagin coordinates.

\begin{lemma}\label{lem:phcoords}
Let $\xi$ be a real vector bundle over $M^{4n}_{\ell,m}$ with Pontryagin
coordinates $P_{r,j}$ and $Q$ as in \eqref{eq:coords}. Then
\[
\ph_{j}(\xi)=\frac{2}{(2j)!}\sum_{r=1}^{k}
s_{j}\big(P_{r,1},\dots ,P_{r,j}\big)\,x_{r}^{\,j}
\qquad (1\le j\le n-1),
\]
and
\[
\ph_{n}(\xi)=\frac{2}{(2n)!}\Big(\sum_{r=1}^{k}\varepsilon_{r}\,
s_{n}\big(P_{r,1},\dots ,P_{r,n-1},0\big)+(-1)^{n-1}n\,Q\Big)
\,\omega_{4n,\ell,m}.
\]
\end{lemma}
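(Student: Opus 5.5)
The plan is to substitute the Pontryagin coordinates into formula \eqref{eq:ph} and reduce the resulting polynomial using the ring structure \eqref{eq:ringM}. The key structural fact is that $H^{*}(M^{4n}_{\ell,m};\Z)$ has no mixed products: $x_{r}x_{s}=0$ for $r\neq s$. Consequently, for any monomial $t_{1}^{a_{1}}\cdots t_{j}^{a_{j}}$ of weighted degree $j$ (that is, $\sum_{i}i\,a_{i}=j$), evaluating at $t_{i}=p_{i}(\xi)=\sum_{r}P_{r,i}x_{r}^{i}$ for $i\le n-1$ gives
\[
\prod_{i}\Big(\sum_{r}P_{r,i}x_{r}^{i}\Big)^{a_{i}}=\sum_{r=1}^{k}\Big(\prod_{i}P_{r,i}^{a_{i}}\Big)x_{r}^{\,j},
\]
because every cross term involves a product $x_{r}x_{s}$ with $r\neq s$. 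Each $s_{j}$ is weighted homogeneous of degree $j$; this follows by induction from the recursion \eqref{eq:newton}. Summing over its monomials therefore yields
\[
s_{j}(p_{1}(\xi),\dots,p_{j}(\xi))=\sum_{r}s_{j}(P_{r,1},\dots,P_{r,j})\,x_{r}^{j}\qquad (j\le n-1).
\]
Combined with \eqref{eq:ph}, this gives the first formula.

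For $j=n$, I would split $s_{n}(t_{1},\dots,t_{n})$ as $s_{n}(t_{1},\dots,t_{n-1},0)$ plus the part involving $t_{n}$. Since $t_{n}$ has weight $n$, it can only occur linearly and alone, so its contribution is $(-1)^{n-1}n\,t_{n}$ by \eqref{eq:newton}. (The terms $t_{q}s_{n-q}$ with $q<n$ cannot contain $t_{n}$, since $s_{n-q}$ involves only $t_{1},\dots,t_{n-q}$.) The $t_{n}$-free part evaluates, by the same cross-term argument, to
\[
\sum_{r}s_{n}(P_{r,1},\dots,P_{r,n-1},0)\,x_{r}^{n}.
\]
Substituting $x_{r}^{n}=\varepsilon_{r}\omega_{4n,\ell,m}$ and $p_{n}(\xi)=Q\,\omega_{4n,\ell,m}$ then gives the second formula after multiplying by $2/(2n)!$.

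The only point needing care is the weighted-homogeneity and linearity of $t_{n}$ in $s_{n}$, which I would verify by a short induction on \eqref{eq:newton}. Beyond that, the argument is routine bookkeeping, and I expect no real obstacle.
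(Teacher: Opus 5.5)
Your proposal is correct and follows essentially the same route as the paper. For $j\le n-1$ you use the weighted homogeneity of $s_j$ together with the vanishing of mixed products $x_rx_s$ ($r\neq s$); for $j=n$ you use the splitting $s_n(t_1,\dots,t_n)=s_n(t_1,\dots,t_{n-1},0)+(-1)^{n-1}n\,t_n$ read off from \eqref{eq:newton}, followed by $x_r^{n}=\varepsilon_r\omega_{4n,\ell,m}$ and $p_n(\xi)=Q\,\omega_{4n,\ell,m}$. Your explicit check that the terms $t_q s_{n-q}$ with $q<n$ contain no $t_n$ simply spells out the step the paper cites as Newton's identity.
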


\begin{proof}
Assign the weight $i$ to $t_{i}$, so that $s_{j}$ is homogeneous of weight
$j$. Accordingly, substituting $t_{i}=p_{i}(\xi)$ and then \eqref{eq:coords},
each monomial of $s_{j}$ becomes a linear combination of products
$x_{r_{1}}\cdots x_{r_{j}}$ with $1\le r_{1},\dots ,r_{j}\le k$, and such a
product vanishes by \eqref{eq:ringM} unless $r_{1}=\dots =r_{j}$. For $1\le j\le n-1$ this yields
$s_{j}(p_{1}(\xi),\dots ,p_{j}(\xi))
=\sum\limits_{r=1}^{k}s_{j}(P_{r,1},\dots ,P_{r,j})\,x_{r}^{\,j}$, and the first formula
follows from \eqref{eq:ph}.

For $j=n$, Newton's identity (the recursion \eqref{eq:newton}) yields
\[
s_{n}(t_{1},\dots ,t_{n})=s_{n}(t_{1},\dots ,t_{n-1},0)+(-1)^{n-1}n\,t_{n}.
\]
Substituting $t_{i}=p_{i}(\xi)$ and arguing as above, the first summand on
the right equals
\[
\sum\limits_{r=1}^{k}s_{n}(P_{r,1},\dots ,P_{r,n-1},0)\,x_{r}^{\,n}
=\sum\limits_{r=1}^{k}\varepsilon_{r}\,s_{n}(P_{r,1},\dots ,P_{r,n-1},0)
\,\omega_{4n,\ell,m}
\]
and the second equals
$(-1)^{n-1}n\,Q\,\omega_{4n,\ell,m}$ by \eqref{eq:coords}. The second formula
now follows from \eqref{eq:ph}.
\end{proof}

The following lemma expresses the numbers $\kappa_{i,j}$ of
\eqref{eq:gamma} as coefficients of the powers of the series $g$ of
\eqref{eq:g}. It is through this identification that the recursion
\eqref{eq:Arec} enters the proof of Theorem~\ref{lem:translate}, alongside the
classes $g(x_{r})^{j}$ of Lemma~\ref{lem:chM}.
\begin{lemma}\label{lem:gamma}
For all $i,j\ge 1$, the number $\kappa_{i,j}$ is the coefficient of $t^{j}$
in $g(t)^{i}$.
\end{lemma}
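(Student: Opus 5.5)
We argue by induction on $i$, using the recursion \eqref{eq:gamma} and the Cauchy product formula for power series. Write $[t^{j}]F$ for the coefficient of $t^{j}$ in $F\in\Q[[t]]$. The plan is to show that the family $\big([t^{j}]g(t)^{i}\big)_{i,j\ge1}$ satisfies the same initial conditions and the same recursion as $(\kappa_{i,j})_{i,j\ge 1}$. Since \eqref{eq:gamma} determines $\kappa_{i,j}$ uniquely, by induction on $i$, the two families then coincide.

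First, the case $i=1$. By \eqref{eq:g}, $[t^{j}]g(t)=\frac{2}{(2j)!}$ for $j\ge1$, which is $\kappa_{1,j}$.

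Second, the vanishing for $j<i$. The series $g$ has no constant term, so $g(t)=t\,h(t)$ with $h\in\Q[[t]]$. Hence $g(t)^{i}=t^{i}h(t)^{i}$, and $[t^{j}]g(t)^{i}=0$ for $j<i$. This matches $\kappa_{i,j}=0$ for $j<i$.

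Third, the recursion for $j\ge i\ge2$. We expand $g^{i}=g\cdot g^{i-1}$ as a Cauchy product:
\[
[t^{j}]g(t)^{i}=\sum_{q\ge1}\frac{2}{(2q)!}\,[t^{j-q}]g(t)^{i-1}.
\]
By the vanishing just proved, applied to $g^{i-1}$, the term with index $q$ is zero unless $j-q\ge i-1$, that is, unless $q\le j-i+1$. The sum therefore runs over $1\le q\le j-i+1$. By the induction hypothesis on $i-1$, the right-hand side is
\[
\sum_{q=1}^{j-i+1}\frac{2}{(2q)!}\,\kappa_{i-1,j-q},
\]
which is exactly $\kappa_{i,j}$ by \eqref{eq:gamma}. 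For $j-q<i-1$ the induction hypothesis also gives $\kappa_{i-1,j-q}=0$, so nothing is lost by truncating the sum.

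There is no real obstacle. The only point needing care is the range of summation: the truncated upper limit $j-i+1$ in \eqref{eq:gamma} must agree with the vanishing of the lower coefficients of $g^{i-1}$, and the second step supplies exactly this.
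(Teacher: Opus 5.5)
Your proof is correct and follows the paper's argument. Both proceed by induction on $i$, read the base case off \eqref{eq:g}, and expand $g^{i}=g\cdot g^{i-1}$ as a Cauchy product truncated at $q\le j-i+1$. The only cosmetic difference is that you prove the vanishing for $j<i$ directly from $g(t)=t\,h(t)$, whereas the paper gets it from the sum becoming empty; your version also keeps every index of $\kappa_{i-1,j-q}$ positive.
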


\begin{proof}
We proceed by induction on $i$. For $i=1$, the coefficient of $t^{j}$ in
$g(t)$ is $\frac{2}{(2j)!}=\kappa_{1,j}$, by \eqref{eq:g} and
\eqref{eq:gamma}. Let $i\ge 2$. Since $g$ has no constant term, the
coefficient of $t^{j}$ in $g^{i}=g\cdot g^{i-1}$ equals
$\sum\limits_{q\ge 1}\frac{2}{(2q)!}\,\kappa_{i-1,\,j-q}$ by the induction
hypothesis. As $\kappa_{i-1,\,j-q}=0$ for $j-q<i-1$, this sum equals
$\sum\limits_{q=1}^{\,j-i+1}\frac{2}{(2q)!}\,\kappa_{i-1,\,j-q}$, which is
$\kappa_{i,j}$ by \eqref{eq:gamma}. If $j<i$, this sum is empty, so the
coefficient of $t^{j}$ in $g^{i}$ is $0$, which again equals $\kappa_{i,j}$
by \eqref{eq:gamma}.
\end{proof}
Combining the two lemmas above, we can now express the coordinates of
$c(\xi)$ in terms of the Pontryagin coordinates of $\xi$.
\begin{theorem}\label{lem:translate}
Let $\xi$ be a real vector bundle over $M^{4n}_{\ell,m}$ with Pontryagin
coordinates $P_{r,j}$ and $Q$ as in \eqref{eq:coords}, and let $A_{r,j}$ and
$B$ be the coordinates of
\[
c(\xi)=\sum_{r=1}^{k}\sum_{j=1}^{n-1}A_{r,j}\,\sigma_{r}^{\,j}+B\,\tau
\]
with respect to the basis of $\Ku(M^{4n}_{\ell,m})$ given by Proposition~\ref{prop:KM}. Then, with the polynomials $\A_{j}$ of
\eqref{eq:Arec} and the invariant $\mathcal{B}(\xi)$ of \eqref{eq:defB}, we have
\begin{equation}\label{eq:Arj}
A_{r,j}=\A_{j}\big(P_{r,1},\dots ,P_{r,j}\big)\quad
(1\le r\le k,\ 1\le j\le n-1),\qquad\text{and}\qquad B=\mathcal{B}(\xi).
\end{equation}
\end{theorem}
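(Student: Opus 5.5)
The plan is to apply the Chern character to both sides of the expansion of $c(\xi)$ and compare components degree by degree in $H^{*}(M^{4n}_{\ell,m};\Q)$. Since $\Ku(M^{4n}_{\ell,m})$ is free, $\ch$ is injective, so the coordinates $A_{r,j},B$ are determined by $\ph(\xi)=\ch(c(\xi))$. By Lemma~\ref{lem:chM},
\[
\ph(\xi)=\sum_{r=1}^{k}\sum_{i=1}^{n-1}A_{r,i}\,g(x_{r})^{i}+B\,\omega_{4n,\ell,m}.
\]
By Lemma~\ref{lem:gamma}, $g(x_{r})^{i}=\sum_{j\ge i}\kappa_{i,j}x_{r}^{\,j}$. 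Because $H^{4j}(M^{4n}_{\ell,m};\Q)$ has basis $\{x_{1}^{j},\dots,x_{k}^{j}\}$ for $1\le j\le n-1$, the degree-$4j$ component of $\ph(\xi)$ is
\[
\sum_{r}\Big(A_{r,j}+\sum_{i=1}^{j-1}\kappa_{i,j}A_{r,i}\Big)x_{r}^{\,j},
\]
using $\kappa_{j,j}=1$, which is the leading coefficient of $g^{j}$.

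For $1\le j\le n-1$, comparing this with the first formula of Lemma~\ref{lem:phcoords} coefficient by coefficient gives
\[
A_{r,j}=\frac{2}{(2j)!}s_{j}(P_{r,1},\dots,P_{r,j})-\sum_{i<j}\kappa_{i,j}A_{r,i}.
\]
This is exactly the recursion \eqref{eq:Arec} evaluated at $t_{i}=P_{r,i}$. Induction on $j$ (base case $A_{r,1}=P_{r,1}$, since $s_{1}=t_{1}$ and $2/2!=1$) then yields $A_{r,j}=\A_{j}(P_{r,1},\dots,P_{r,j})$.

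In the top degree $4n$, each $x_{r}^{\,n}=\varepsilon_{r}\omega_{4n,\ell,m}$, so the component is
\[
\Big(B+\sum_{r}\varepsilon_{r}\sum_{i=1}^{n-1}\kappa_{i,n}A_{r,i}\Big)\omega_{4n,\ell,m}.
\]
Equating this with the second formula of Lemma~\ref{lem:phcoords} gives
\[
B=\sum_{r}\varepsilon_{r}\Big(\frac{2}{(2n)!}s_{n}(P_{r,1},\dots,P_{r,n-1},0)-\sum_{i<n}\kappa_{i,n}\A_{i}(P_{r,\bullet})\Big)+(-1)^{n-1}\frac{2n}{(2n)!}Q.
\]
The bracket is precisely $\A_{n}(P_{r,1},\dots,P_{r,n-1},0)$. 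This requires that $\A_{i}$ for $i<n$ does not involve $t_{n}$, which holds because $\A_{i}\in\Q[t_{1},\dots,t_{i}]$. Finally, $2n/(2n)!=1/(2n-1)!$, so $B=\mathcal{B}(\xi)$.

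The main point requiring care is the bookkeeping in the top degree. The degree-$4n$ part of $g(x_{r})^{i}$ must be converted via $x_{r}^{\,n}=\varepsilon_{r}\omega_{4n,\ell,m}$, while $\tau$ contributes no lower-degree terms. The $t_{n}$ contribution must enter only through the $Q$-term, and this is guaranteed by the splitting of $s_{n}$ in Lemma~\ref{lem:phcoords}. The integrality claims in the surrounding text come for free, since $A_{r,j}$ and $B$ are integers by definition as coordinates in the basis of Proposition~\ref{prop:KM}.
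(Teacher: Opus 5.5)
Your proposal is correct and follows the paper's own proof. You apply $\ch$ to the expansion of $c(\xi)$ via Lemma~\ref{lem:chM} and expand $g(x_r)^{i}$ with Lemma~\ref{lem:gamma}. You then compare coefficients of $x_r^{\,j}$ for $j\le n-1$ against Lemma~\ref{lem:phcoords} to recover the recursion \eqref{eq:Arec} (using $\kappa_{j,j}=1$), and in degree $4n$ you use $x_r^{\,n}=\varepsilon_r\omega_{4n,\ell,m}$ and set $t_n=0$ in \eqref{eq:Arec} to identify $B$ with $\mathcal{B}(\xi)$. The only difference is cosmetic: you invoke injectivity of $\ch$, which is harmless but not needed, since the coordinates are given and one only compares both sides after applying $\ch$.
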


\begin{proof}
Applying $\ch$ to $c(\xi)$ and using Lemma~\ref{lem:chM}, we obtain
\begin{equation}\label{eq:chc}
\ph(\xi)=\ch\big(c(\xi)\big)
=\sum_{r=1}^{k}\sum_{j=1}^{n-1}A_{r,j}\,g(x_{r})^{j}
+B\,\omega_{4n,\ell,m}.
\end{equation}
We compare the two sides of \eqref{eq:chc} degree by degree.

Consider first the degrees $4j$ with $1\le j\le n-1$. By
Lemma~\ref{lem:phcoords}, the degree-$4j$ part of the left-hand side of
\eqref{eq:chc} is
\[
\frac{2}{(2j)!}\sum_{r=1}^{k}s_{j}\big(P_{r,1},\dots ,P_{r,j}\big)
\,x_{r}^{\,j},
\]
while by Lemma~\ref{lem:gamma} the degree-$4j$ part of the right-hand side is
\[
\sum_{r=1}^{k}\Big(\sum_{i=1}^{j}\kappa_{i,j}A_{r,i}\Big)x_{r}^{\,j}.
\]
Since $\{x_{1}^{\,j},\dots ,x_{k}^{\,j}\}$ is a basis of
$H^{4j}(M^{4n}_{\ell,m};\Z)$, comparing coefficients yields, for each $r$,
\[
\frac{2}{(2j)!}\,s_{j}\big(P_{r,1},\dots ,P_{r,j}\big)
=\sum_{i=1}^{j}\kappa_{i,j}\,A_{r,i}.
\]
As $\kappa_{j,j}=1$ by \eqref{eq:gamma}, these equations determine the
$A_{r,j}$ recursively, and the resulting recursion is \eqref{eq:Arec}
evaluated at $t_{i}=P_{r,i}$; hence
$A_{r,j}=\A_{j}(P_{r,1},\dots ,P_{r,j})$.

In degree $4n$, we compare the coefficients of $\omega_{4n,\ell,m}$ on the
two sides of \eqref{eq:chc}, namely
\[
\frac{2}{(2n)!}\Big(\sum_{r=1}^{k}\varepsilon_{r}\,
s_{n}\big(P_{r,1},\dots ,P_{r,n-1},0\big)+(-1)^{n-1}n\,Q\Big)
\qquad\text{and}\qquad
\sum_{r=1}^{k}\varepsilon_{r}\sum_{j=1}^{n-1}\kappa_{j,n}A_{r,j}+B ,
\]
by Lemmas~\ref{lem:phcoords} and~\ref{lem:gamma} together
with $x_{r}^{\,n}=\varepsilon_{r}\omega_{4n,\ell,m}$ (see Section~\ref{sec:intro}). Equating them and solving for $B$ yields
\[
B=\sum_{r=1}^{k}\varepsilon_{r}\Big(\frac{2}{(2n)!}\,
s_{n}\big(P_{r,1},\dots ,P_{r,n-1},0\big)
-\sum_{j=1}^{n-1}\kappa_{j,n}A_{r,j}\Big)
+(-1)^{n-1}\frac{2n}{(2n)!}\,Q .
\]
Putting $t_{n}=0$ in \eqref{eq:Arec} and using the identity
$A_{r,j}=\A_{j}(P_{r,1},\dots ,P_{r,j})$ proved above yields the equality
\[
\frac{2}{(2n)!}\,s_{n}\big(P_{r,1},\dots ,P_{r,n-1},0\big)
-\sum_{j=1}^{n-1}\kappa_{j,n}A_{r,j}
=\A_{n}\big(P_{r,1},\dots ,P_{r,n-1},0\big).
\]
We thus obtain
\[
B=\sum_{r=1}^{k}\varepsilon_{r}\,\A_{n}\big(P_{r,1},\dots ,P_{r,n-1},0\big)
+(-1)^{\,n-1}\frac{1}{(2n-1)!}\,Q =\mathcal{B}(\xi).
\]
\end{proof}
\section{Existence of stable complex structures on bundles over
\texorpdfstring{$M^{4n}_{\ell,m}$}{}}\label{sec:proof}

We now combine the computations of the preceding sections into a proof of
Theorem~\ref{thm:main}, and then deduce Corollary~\ref{cor:tangent} by
applying the theorem to the tangent bundle of $M^{4n}_{\ell,m}$.

\begin{proof}[\textbf{Proof of Theorem~\ref{thm:main}}]
Since the cohomology groups of $M^{4n}_{\ell,m}$ are torsion free and
concentrated in degrees divisible by $4$, while the groups
$\Ku(M^{4n}_{\ell,m})$ and $\KO(M^{4n}_{\ell,m})$ are free abelian by
Proposition~\ref{prop:KM} and Lemma~\ref{lem:KOMfree},
Corollary~\ref{lem:criterion} implies that $\xi$ admits a stable complex structure
if and only if $c(\xi)\in 2\Ku(M^{4n}_{\ell,m})$, that is, if and only if
all coordinates of $c(\xi)$ with respect to the basis of
Proposition~\ref{prop:KM} are even.

By Theorem~\ref{lem:translate}, these coordinates are the integers
$\A_{j}(P_{r,1},\dots ,P_{r,j})$, for $1\le r\le k$ and $1\le j\le n-1$, and
$\mathcal{B}(\xi)$. By Proposition~\ref{prop:imcM}, the integers
$\A_{j}(P_{r,1},\dots ,P_{r,j})$ with $j$ odd are even, and
$\mathcal{B}(\xi)$ is even when $n$ is odd.  Hence all the coordinates of
$c(\xi)$ are even precisely when conditions \textup{(a)} and \textup{(b)} hold, and the
theorem follows.
\end{proof}

\begin{proof}[\textbf{Proof of Corollary~\ref{cor:tangent}}]
Write $P_{r,j}$ for the Pontryagin coordinates of $TM^{4n}_{\ell,m}$.

Let $n\ge 3$. Then $\sigma_{r}^{2}$ belongs to the basis of
Proposition~\ref{prop:KM}, so \eqref{ehvfjfrv}, together with
Theorem~\ref{lem:translate}, yields
\[
\A_{2}\big(P_{r,1},P_{r,2}\big)=-1 \qquad (1\le r\le k).
\]
Thus condition \textup{(a)} of Theorem~\ref{thm:main} fails for $j=2$, and
consequently $M^{4n}_{\ell,m}$ admits no stable almost complex structure.

Let now $n=2$. The range $2\le j\le n-1$ in condition \textup{(a)} of
Theorem~\ref{thm:main} is then empty, and thus $M^{8}_{\ell,m}$ is stably
almost complex if and only if $\mathcal{B}(TM^{8}_{\ell,m})$ is an even
integer. By \eqref{eq:pHP}, $p(\HP^{2})=1+2u+7u^{2}$, so \eqref{heog} yields the Pontryagin coordinates of $TM^{8}_{\ell,m}$ as
\[
P_{r,1}=2\quad (1\le r\le k),\qquad Q=7(\ell-m).
\]
The recursion \eqref{eq:Arec} yields
$\A_{2}(t_{1},t_{2})=\tfrac{1}{12}\big(t_{1}^{2}-t_{1}-2t_{2}\big)$, thus
$\A_{2}(P_{r,1},0)=\tfrac{1}{6}$, and \eqref{eq:defB} yields
\[
\mathcal{B}\big(TM^{8}_{\ell,m}\big)
=\frac{1}{6}\sum_{r=1}^{k}\varepsilon_{r}-\frac{1}{6}\,Q=-(\ell-m),
\]
which is even precisely when $\ell\equiv m\pmod 2$. Together with the case
$n\ge 3$, this proves the corollary.
\end{proof}
\section*{Acknowledgement}

This work will be included as part of the author's doctoral thesis. The author is grateful for financial support in the form of the Prime Minister's
Research Fellowship, Government of India (PMRF/2502403).
\bibliographystyle{alpha} 
\bibliography{references}
\end{document}